\newcommand{\bea}{\begin{eqnarray}}
\newcommand{\eea}{\end{eqnarray}}
\newcommand{\bna}{\begin{eqnarray*}}
\newcommand{\ena}{\end{eqnarray*}}
\numberwithin{equation}{section}
\theoremstyle{plain}
\newtheorem{theorem}{Theorem}
\newtheorem{lemma}{Lemma}
\newtheorem{proposition}{Proposition}
\theoremstyle{definition}
\newtheorem{remark}{Remark}
\renewcommand{\Re}{\operatorname{Re}}
\begin{document}

\title{Bounds for $\rm GL_2\times GL_2$ $L$-functions in depth aspect}

\author{Qingfeng Sun}

\begin{abstract}
Let $f$ and $g$ be holomorphic or Maass cusp forms
for $\rm SL_2(\mathbb{Z})$ and let $\chi$ be
a primitive Dirichlet character of prime power conductor $\mathfrak{q}=p^{\kappa}$ with $p$ prime
and $\kappa>12$. A subconvex bound for the central values of the Rankin-Selberg $L$-functions
$L(s,f\otimes g \otimes \chi)$ is proved in the depth-aspect
$$
L\left(\frac{1}{2},f\otimes g \otimes \chi\right)\ll_{f,g,\varepsilon}
p^{3/4}\mathfrak{q}^{15/16+\varepsilon}.
$$
\end{abstract}

\keywords{Subconvexity, $\rm GL_2\times GL_2$ $L$-functions, depth aspect}

\subjclass[2010]{11F66, 11M41}

\maketitle


\section{Introduction}

Let $f$ and $g$ be holomorphic or Maass cusp forms
for $\rm SL_2(\mathbb{Z})$ with normalized Fourier coefficients
$\lambda_f(n)$ and $\lambda_g(n)$ (such that $\lambda_f(1)=1$ and $\lambda_g(1)=1$), respectively.
Let $\chi$ be a primitive Dirichlet character of prime power conductor
$\mathfrak{q}=p^{\kappa}$ with $p$ prime.
The $L$-function associated with $f$ and $g\otimes\chi$ is given by
\bna
L(s,f\otimes g \otimes \chi)=L(2s,\chi^2)\sum_{n=1}^{\infty}
\frac{\lambda_f(n)\lambda_g(n)\chi(n)}{n^s}
\ena
for $\Re(s)>1$, which can be analytically extended to $\mathbb{C}$ and satisfies
a functional equation relating $s$ and $1-s$. The approximate functional equation
and the Phragmen-Lindel\"{o}f principle imply that
$L(1/2,f\otimes g \otimes \chi)\ll_{f,g,\varepsilon} \mathfrak{q}^{1+\varepsilon}$
for any $\varepsilon>0$, which is the convexity bound in the depth aspect.
The purpose of this paper is to prove the following subconvexity bound.

\begin{theorem}
Let $f$ and $g$ be holomorphic or Maass cusp forms
for $\rm SL_2(\mathbb{Z})$ and let $\chi$
a primitive Dirichlet character of prime power conductor
$\mathfrak{q}=p^{\kappa}$ with $\kappa\geq 12$. We have
$$
L\left(\frac{1}{2},f\otimes g \otimes \chi\right)\ll_{f,g,\varepsilon}
p^{3/4}
\mathfrak{q}^{15/16+\varepsilon}
$$
for any $\varepsilon>0$.
\end{theorem}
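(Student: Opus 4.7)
The plan is to follow the $\delta$-method approach to depth-aspect subconvexity, combining the Duke--Friedlander--Iwaniec circle method with Postnikov's $p$-adic conductor-lowering trick for characters of prime-power conductor.

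First, by the standard approximate functional equation together with a dyadic decomposition, the theorem reduces to proving the estimate
$$S(N) := \sum_n \lambda_f(n)\lambda_g(n)\chi(n)\, V(n/N) \ll N^{1/2} p^{3/4} \mathfrak{q}^{15/16+\varepsilon}$$
for a fixed smooth bump $V$ and every $N \leq \mathfrak{q}^{2+\varepsilon}$. The critical range is $N\asymp\mathfrak{q}^{2}$, where one must save a factor of size $\mathfrak{q}^{1/16}/p^{3/4}$ over the trivial bound.

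To detach the two Fourier coefficients, I would write
$$S(N) = \sum_{n,m}\lambda_f(n)\lambda_g(m)\chi(m)\,\delta(n-m)\, V(n/N)U(m/N)$$
and expand $\delta(n-m)$ via the DFI circle method, with an auxiliary modulus parameter $Q$ to be optimised at the end. Setting $\ell=\lceil\kappa/2\rceil$, Postnikov's formula $\chi\bigl(a(1+p^\ell u)\bigr) = \chi(a)\,e_{p^\kappa}\bigl(c_\chi(a)\,p^\ell u\bigr)$ for $(a,p)=1$ exhibits $\chi$, on each residue class modulo $p^\ell$, as an additive character of effective modulus $p^{\kappa-\ell}\asymp p^{\kappa/2}$; I would align the delta decomposition with this splitting. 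Subsequently I would apply $\mathrm{GL}_2$ Voronoi summation to the $n$-sum and Poisson summation to the $m$-sum: thanks to the Postnikov trick, the combined character to modulus $qp^{\kappa/2}$ makes the Poisson dual for $m$ of length only $\asymp qp^{\kappa/2}/N$, a substantial shortening compared with $N$.

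After this round of dualisation, I would apply Cauchy--Schwarz in the $n$-variable to eliminate the remaining $\lambda_f$, open the resulting square, and apply a second Poisson summation in $n$ modulo the product of the surviving conductors. What remains are bilinear sums involving Kloosterman-type character sums to $p$-power moduli, which must be evaluated by $p$-adic stationary phase rather than the Weil bound (the latter being insufficient in this regime). Balancing the diagonal contribution from Cauchy--Schwarz against the off-diagonal character-sum bound and optimising $Q$ would yield the exponent $15/16$; the unavoidable loss from $p$-adic stationary phase is what accounts for the factor $p^{3/4}$, and the hypothesis $\kappa\geq 12$ is precisely what makes the final estimate beat convexity. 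The main technical obstacle lies in the evaluation of these character sums modulo prime powers, which is the most delicate step of the argument.
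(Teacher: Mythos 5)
Your high-level toolbox — DFI $\delta$-method, dualization, Cauchy--Schwarz, Poisson, and a careful analysis of character sums to prime-power moduli — is the right one, but the specific plan diverges from the paper's proof in ways that matter, and one step as written does not work.

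The paper's conductor-lowering is \emph{not} Postnikov's formula but Munshi's trick of writing $\delta(n-m)=\delta\big((n-m)/p^\lambda\big)\mathbf{1}_{p^\lambda\mid n-m}$, detecting the congruence by additive characters modulo $p^\lambda$, and running the $\delta$-method at scale $Q=\sqrt{N/p^\lambda}$. The free parameter $\lambda$ is then optimized to $\lambda=\lfloor 3\kappa/4\rfloor$, not $\kappa/2$; this extra degree of freedom is precisely what produces the exponent $15/16$. Your Postnikov alignment at $\ell=\lceil\kappa/2\rceil$ is a genuinely different mechanism (closer in spirit to Blomer--Mili\'cevi\'c), and it is not clear that splitting at $\kappa/2$ balances the diagonal and off-diagonal terms to reach $15/16$: the paper's sketch in Section~2 shows the balance $p^{3\lambda/2-\kappa}=p^{(\kappa-\lambda)/2}$, forced by $\lambda=3\kappa/4$. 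Also, the factor $p^{3/4}$ in the theorem is not a loss from $p$-adic stationary phase; it arises from the integrality constraint $\lambda=\lfloor 3\kappa/4\rfloor$ applied to the bound $p^{(\kappa+\lambda+3)/4}$.

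The more concrete problem is the proposed step ``apply $\mathrm{GL}_2$ Voronoi to the $n$-sum and Poisson to the $m$-sum.'' In your own normalization $\lambda_g(m)\chi(m)$ sits on $m$, and you cannot apply Poisson summation to a sum weighted by $\mathrm{GL}_2$ Fourier coefficients; the correct transform there is again $\mathrm{GL}_2$ Voronoi. Indeed the paper applies $\mathrm{GL}_2$ Voronoi to \emph{both} variables (Lemmas~\ref{A-voronoi} and \ref{B-voronoi}), then Cauchy--Schwarz to remove one set of coefficients, and only then Poisson summation to the now-coefficient-free long variable. Finally, the paper treats the resulting complete character sums (Lemma~\ref{character sum estimate}) not by $p$-adic stationary phase but by elementary lifting arguments — writing each variable $b=a_1p^{\alpha+\delta_1}+a_2$ etc.\ and using the fact that $\chi(1+zp^{\kappa-\alpha})$ is an additive character modulo $p^\alpha$ — which reduces the count to solving a small system of polynomial congruences. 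So while your plan gestures at the right circle of ideas, it would need the Munshi $\delta$-trick with a tunable $\lambda$, Voronoi on both sums before Cauchy--Schwarz, and an explicit congruence count for the character sums to actually yield the stated bound.
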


In the $\rm GL_1$ case, by developing a general new theory of estimation of
short exponential sums involving $p$-adic analytic phases,
Mili\'{c}evi\'{c} \cite{Mili} proved a sub-Weyl subconvexity bound for the central values $L(1/2,\chi)$
of Dirichlet $L$-functions of the form
$$
L(1/2,\chi)\ll p^r\mathfrak{q}^{\theta}(\log q)^{1/2}
$$
with a fixed $r$ and $\theta>\theta_0\approx 0.1645$.
By introducing further $p$-adic tools, Blomer and Mili\'{c}evi\'{c} \cite{BM}
also considered the $\rm GL_2$ case and showed that
\bea
L\left(1/2+it,f\otimes\chi\right)\ll_{p,t,f,\varepsilon}\mathfrak{q}^{1/3+\varepsilon},
\eea
where the implied constant on $p$ and $t$ is explicit and polynomial. It worth noting that
Munshi and Singh \cite{Munshi-Singh} proved the same result using the approach in \cite{Munshi 1}.
Letang \cite{Le} also obtained depth-aspect subconvexity for $\rm GL_2$ by investigating the second
integral moments of families of automorphic $L$-functions.
Using the ideas in \cite{Munshi 1}, the author \cite{SZ} extended Munshi and Singh's results
in \cite{Munshi-Singh} to the $\rm GL_3$ case and proved that for
$\pi$ a Hecke-Maass cusp form for $\rm SL_3(\mathbb{Z})$,
$$
L\left(\frac{1}{2},\pi\otimes \chi\right)\ll_{\pi,\varepsilon}p^{3/4}
\mathfrak{q}^{3/4-3/40+\varepsilon}
$$
for any $\varepsilon>0$. Theorem 1 gives a depth-aspect subconvex bound for
the $\rm GL_2\times GL_2$ case.

\medskip

We also note that the bound in Theorem 1 can be compared with the recent $t$-aspect subconvexity
for $\rm GL_2\times GL_2$ L-functions in
Acharya, Sharma and Singh \cite{ASS} that
$$
L(1/2+it, f\otimes g)\ll_{f,\varepsilon}(1+|t|)^{15/16+\varepsilon}.
$$

To prove Theorem 1, we will use similar ideas as in \cite{Munshi 1},
\cite{Munshi-Singh} and \cite{SZ}. Moreover, since the holomorphic case is usually easier,
to simplify the argument, we prove Theorem 1 only for the case of Maass forms.
\begin{remark}
The ideas of treating character sums in this paper can be applied to prove a $\rm GL_3\times GL_2$ subconvexity bound
of the form
\bna
L\left(\frac{1}{2},\pi\otimes f\otimes \chi\right)\ll_{p,\pi,f,\varepsilon}
\mathfrak{q}^{3/2-3/20+\varepsilon},
\ena
where the dependence of the implied constant on $p$ is
explicit and polynomial. Here $\pi$ is a Hecke-Maass cusp form for $\rm SL_3(\mathbb{Z})$, $f$
is a holomorphic or Maass cusp form
for $\rm SL_2(\mathbb{Z})$, and $\chi$ is 
a primitive Dirichlet character of prime power conductor
$\mathfrak{q}=p^{\kappa}$ with $\kappa$ large enough.
This can be compared with the recent $t$-aspect subconvexity bound for $\rm GL_3\times GL_2$
$L$-functions (see Lin and the author \cite{LS})
\bna
L(1/2+it, \pi\otimes f)\ll_{\pi,f,\varepsilon}(1+|t|)^{3/2-3/20+\varepsilon}.
\ena
and will appear in a separate paper.

\end{remark}

\section{Sketch of the proof}

By the functional equation we have
$L\left(1/2,f\otimes g \otimes \chi\right)
\ll N^{-1/2}S(N)$, where
\bna
S(N)=\sum_{n\sim N}\lambda_f(n)\lambda_g(n)\mu\chi(n),
\ena
with $N\sim \mathfrak{q}^2$. Applying the conductor lowering mechanism
introduced by Munshi \cite{Munshi 1}, we have
\bna
S(N)=\mathop{\sum\sum}_{n,m\sim N\atop n\equiv
m ({\rm mod}\,p^{\lambda})}\lambda_g(n)\lambda_f(m)\chi(m)\delta\left(\frac{n-m}{p^\lambda}\right)
\ena
where $\delta: \mathbb{Z}\rightarrow \{0,1\}$ with
$\delta(0)=1$ and $\delta(n)=0$ for $n\neq 0$, and $\lambda\geq 2$ is an integer to be chosen later.
Using Duke, Friedlander and Iwaniec's delta method and removing the congruence
$n\equiv
m ({\rm mod}\,p^{\lambda})$ by exponential sums we get
\bna
S(N)\approx \frac{1}{Qp^{\lambda}}\sum_{q\sim Q\atop (q,p)=1}\frac{1}{q}\;
   \sideset{}{^*}\sum\limits_{a(\text{mod} \,qp^{\lambda})}\mathop{\sum\sum}_{n,m\sim N}
   \lambda_g(n)\lambda_f(m)\chi(m)e\left(\frac{a(m-n)}{qp^\lambda}\right),
\ena
where here and throughout, the $*$ on the sum ${\small \sum_{a(\bmod q)}}$ indicates
that the sum over $a$ is restricted to $(a,q)=1$, and we take
$Q=\sqrt{N/p^{\lambda}}$.
Trivially we have $S(N)\ll N^2$.

Recall $\chi$ is of modulus $\mathfrak{q}=p^{\kappa}$.
Then the conductor of the $m$-sum has the size $Qp^{\kappa}$. Applying $\rm GL_2$ Voronoi summation
to the $m$-sum we get that the dual sum is of size $Q^2p^{2\kappa}/N\asymp p^{2\kappa-\lambda}$.
The conductor for the $n$-sum has the size $Qp^\lambda$ and the dual sum after
$\rm GL_2$ Voronoi summation
is essentially supported on summation of size $Q^2p^{2\lambda}/N\asymp p^{\lambda}$. Because of
a Ramanujan sum appearing in the character sum and assuming
square-root cancellation for the remaining part of the character sum,
we find that we have saved
\bna
\frac{N}{Qp^{\kappa}}\times \frac{N}{Qp^{\lambda}}\times Q p^{\lambda/2}
\sim N.
\ena
So we are at the threshold and need to save a little more.

Now we arrive at an expression of the form
\bna
\sum_{q\sim Q}\sum_{m\sim p^{2\kappa-\lambda}}\lambda_f(m)
\sum_{n\sim p^{\lambda}\atop n\equiv -p^{\lambda}\overline{p^{2\kappa-\lambda}}m (\bmod\, q)}\quad
\sum_{b({\rm mod} p^{\lambda})}\lambda_f(n)
C^*(m,n,q),
\ena
where
\bna
\mathfrak{C}^*(m,n,q)=\sum_{c(\text{{\rm mod }} p^\kappa)}\overline{\chi}(c)
  \sideset{}{^*}\sum_{b (\text{{\rm mod }} p^{\lambda})
  }
  e\left(\frac{m\overline{q}\overline{(
  bp^{\kappa-\lambda}+cq)}}{p^{\kappa}}+
\frac{n\overline{q}\overline{b}}{p^\lambda}\right).
\ena
Next we apply Cauchy-Schwartz inequality to get rid of the Fourier coefficients.
Then we need to deal with
\bna
\sum_{q\sim Q}\sum_{m\sim p^{2\kappa-\lambda}}\left|
\sum_{n\sim p^{\lambda}\atop n\equiv -p^{\lambda}\overline{p^{2\kappa-\lambda}}m (\bmod\, q)}\quad
\sum_{b({\rm mod} p^{\lambda})}\lambda_f(n)
C^*(m,n,q)\right|^2.
\ena
Opening the square and applying Poisson summation to the
sum over $m$, we are able to save $p^{\lambda}/Q\sim p^{3\lambda/2-\kappa}$ from the diagonal term
and $p^{2\kappa-\lambda}/(Qp^{\kappa/2})\sim p^{(\kappa-\lambda)/2}$
from the off-diagonal term. So the optimal choice for $\lambda$
is given by $\lambda=3\kappa/4$. In total, we have saved
$
N\times \mathfrak{q}^{1/16}
$
It follows that
\bna
L\left(\frac{1}{2},f\otimes g \otimes \chi\right)\ll
N^{-1/2}S(N)\ll N^{1/2}\mathfrak{q}^{-1/16}\sim \mathfrak{q}^{1-1/16}.
\ena

\section{Proof of Theorem 1}

By the approximate functional equation we have
\bea\label{L-estimate}
L\left(\frac{1}{2},f\otimes g \otimes \chi\right)\ll_{f,g,\varepsilon} \mathfrak{q}^{\varepsilon}
\sup_{N\leq \mathfrak{q}^{2+\varepsilon}}\frac{|S(N)|}{\sqrt{N}},
\eea
where
\bna
S(N)=\sum_{n}\lambda_f(n)\lambda_g(n)\chi(n)
V\left(\frac{n}{N}\right)
\ena
for some smooth function $V$ supported in $[1,2]$ and satisfying $V^{(j)}(x)\ll_j 1$.
Note that by Cauchy-Schwartz inequality and the Rankin-Selberg estimate (see \cite{Mol})
\bea\label{RS}
\sum_{n\leq  x} \left|\lambda_f(n)\right|^2\ll_{f,\varepsilon}
x^{1+\varepsilon},
\eea
we have the trivial bound $S(N)\ll_{f,g,\varepsilon}N$.

\begin{proposition}
Assume $(2\kappa+1)/3\leq \lambda\leq 3\kappa/4$. Then we have
\bna
S(N)\ll N^{3/4+\varepsilon}\big(p^{\kappa-3\lambda/4}+p^{(\kappa+\lambda+3)/4}\big).
\ena
\end{proposition}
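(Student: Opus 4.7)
The plan is to follow the outline in Section 2 and turn each heuristic step into a rigorous estimate, with careful tracking of the character sums. First I would write the smoothed sum $S(N)$ by inserting the conductor-lowering device of Munshi: replace $S(N)$ by
\begin{equation*}
S(N) = \mathop{\sum\sum}_{\substack{n,m \\ n\equiv m\,(\mathrm{mod}\,p^\lambda)}} \lambda_g(n)\lambda_f(m)\chi(m)\,V\!\left(\frac{n}{N}\right)U\!\left(\frac{m}{N}\right)\delta\!\left(\frac{n-m}{p^\lambda}\right),
\end{equation*}
for a suitable smooth weight $U$. Next I would detect the vanishing of $(n-m)/p^\lambda$ by the Duke--Friedlander--Iwaniec circle method with modulus $Q=\sqrt{N/p^\lambda}$ coprime to $p$, producing a sum over $q\sim Q$ and additive characters mod $qp^\lambda$, and finally splitting the character sum additively into a $qp^{\kappa-\lambda}$-part (interacting with $\chi$) and a $p^\lambda$-part by the Chinese remainder theorem.

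Second, I would apply $\mathrm{GL}_2$ Voronoi summation separately to the $n$-sum and the $m$-sum. The $m$-sum has conductor $qp^\kappa$ and its dual length becomes $Q^2p^{2\kappa}/N\asymp p^{2\kappa-\lambda}$; the $n$-sum has conductor $qp^\lambda$ and its dual becomes of length $p^\lambda$. After the two Voronoi transforms the $b$-sum modulo $p^\lambda$ collapses into a Ramanujan-type sum (absorbing the factor of size $p^{\lambda/2}$ claimed in the sketch), and the residual twisted exponential sum is $\mathfrak{C}^*(m,n,q)$ as displayed, which I would evaluate (or at least bound by square-root cancellation) using the standard $p$-adic stationary phase / Postnikov expansion available since $\lambda\geq(2\kappa+1)/3$. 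This is the essential input from the $p$-adic side, and it is where the hypothesis $\lambda\leq 3\kappa/4$ is used to keep the stationary phase regime valid.

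Third, after Voronoi the outer structure is
\begin{equation*}
\sum_{q\sim Q}\sum_{m\sim p^{2\kappa-\lambda}}\lambda_f(m)\,\mathcal{T}(m,q),
\qquad
\mathcal{T}(m,q)=\sum_{\substack{n\sim p^\lambda\\ n\equiv -p^\lambda\overline{p^{2\kappa-\lambda}}m\,(q)}} \lambda_f(n)\,\mathfrak{C}^*(m,n,q)\,\Phi(n,m,q),
\end{equation*}
for some archimedean weight $\Phi$ arising from the Voronoi integrals. I would apply Cauchy--Schwarz in the $m,q$ variables, using the Rankin--Selberg bound \eqref{RS} to handle the $\lambda_f(m)$ factor, reducing the task to bounding $\sum_{q,m}|\mathcal{T}(m,q)|^2$. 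Opening the square in $n_1,n_2$ and applying Poisson summation on $m$ (which now ranges over an arithmetic progression modulo $q$) yields a zero-frequency (diagonal) term and an oscillatory off-diagonal term. The diagonal contributes after saving $p^\lambda/Q\asymp p^{3\lambda/2-\kappa}$, while the off-diagonal, driven by the remaining $p$-adic phase in $\mathfrak{C}^*$, saves $p^{2\kappa-\lambda}/(Qp^{\kappa/2})\asymp p^{(\kappa-\lambda)/2}$, producing the two summands $p^{\kappa-3\lambda/4}$ and $p^{(\kappa+\lambda+3)/4}$ in the final bound after undoing the Cauchy--Schwarz and the Voronoi normalizations.

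The main obstacle, and the heart of the paper, is the evaluation of the character sum $\mathfrak{C}^*(m,n,q)$ and of its second moment over $m$ that arises after Poisson: in the off-diagonal one must locate the $p$-adic stationary points of the combined phase $m\overline{q}\,\overline{(bp^{\kappa-\lambda}+cq)}/p^\kappa$ and extract genuine square-root cancellation, which forces the window $(2\kappa+1)/3\leq\lambda\leq 3\kappa/4$. Everything else is bookkeeping of the Voronoi weights and of the congruence $n\equiv -p^\lambda \overline{p^{2\kappa-\lambda}}m\pmod q$; once the character sum is understood, plugging in the ranges and optimizing gives exactly the stated proposition.
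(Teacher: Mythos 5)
Your proposal follows the same architecture as the paper: conductor lowering, Duke--Friedlander--Iwaniec delta method with moduli $qp^\lambda$, two applications of $\mathrm{GL}_2$ Voronoi, Cauchy--Schwarz, Poisson on $m$, and a $p$-adic analysis of the resulting character sum. The overall route is sound, but two points deserve correction before this could be called a proof.

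First, a factual slip in step two. After the Chinese remainder theorem is applied to the $a$-sum modulo $qp^\lambda$, the Ramanujan sum does not come from the $b$-sum modulo $p^\lambda$; it comes from the $a$-sum modulo $q$, namely
\[
\sideset{}{^*}\sum_{a (\mathrm{mod}\, q)} e\Bigl(\tfrac{\bigl(n\overline{p^\lambda}+m\overline{p^{2\kappa-\lambda}}\bigr)\overline{a}}{q}\Bigr)
=\sum_{\substack{d\mid q \\ n\equiv -m p^\lambda\overline{p^{2\kappa-\lambda}}\,(d)}} d\,\mu(q/d),
\]
which is what produces the congruence $n\equiv -mp^\lambda\overline{p^{2\kappa-\lambda}}\ (\mathrm{mod}\ d)$ and saves a factor of size $\asymp Q$. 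The factor $p^{\lambda/2}$ alluded to in the sketch is the expected square-root cancellation in the $b$-sum modulo $p^\lambda$, which remains \emph{inside} $\mathfrak{C}^*(m,n,q)$; you have the two sources of saving interchanged.

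Second, and more substantively, bounding $\mathfrak{C}^*(m,n,q)$ by square-root cancellation before Cauchy--Schwarz is not what delivers Proposition 1, and asserting it as the "essential input" obscures where the real work lies. The Cauchy--Schwarz is applied for each fixed $q$ and $d\mid q$ to the $m$-sum alone (not jointly in $m,q$); after opening the square in $n_1,n_2$ and applying Poisson on $m$ with modulus $dp^\kappa$, the object one must control is the second-moment character sum
\[
\mathcal{C}^*(n_1,n_2,m)=p^{\kappa}\!\sum_{c\,(p^\kappa)}\;\sideset{}{^*}\sum_{b_1\,(p^\lambda)}\sideset{}{^*}\sum_{b_2\,(p^\lambda)}
\overline{\chi}(\overline{c}-b_1p^{\kappa-\lambda})\chi(\overline{c+mqd^{-1}}-b_2p^{\kappa-\lambda})
e\Bigl(\tfrac{-n_1\overline{q}\overline{b_1}+n_2\overline{q}\overline{b_2}}{p^\lambda}\Bigr).
\]
The paper's Lemma~4 is not merely a size bound for this quantity: the Postnikov expansion forces $p^{\kappa-\lambda}\mid m$, and writing $m=p^{\kappa-\lambda}m'$ with $p^{\ell}\| m'$, it yields both an upper bound and the \emph{congruence constraint} $p^{\min(\ell,\beta-\kappa+\lambda)}\mid (n_1-n_2)$. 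These constraints are what truncate the $n_2$-sum and hence produce the savings $p^{\lambda}/Q$ and $p^{(\kappa-\lambda)/2}$ that you quote. A black-box square-root bound on $\mathcal{C}^*$ without the accompanying congruence conditions on $n_1-n_2$ would leave the $n_2$-sum of full length $\asymp p^\lambda$ and the proposition would not follow. The hypothesis $\lambda\leq 3\kappa/4$ is used precisely so that, after dividing the vanishing condition on $c_1$ by $p^{\kappa-\lambda}$, one may replace $(c_2+p^{\kappa-\lambda}m'qd^{-1})^2$ by $c_2^2$ modulo $p^{\beta-\kappa+\lambda}$; the lower bound $\lambda\geq(2\kappa+1)/3$ ensures the two Postnikov congruences combine to give $\overline{n_2}h^2\equiv\overline{n_1}a^2\ (\mathrm{mod}\ p^{\kappa-\lambda})$. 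These are algebraic reductions, not a stationary-phase validity condition in the analytic sense, so the phrase "keep the stationary phase regime valid" should be sharpened to the explicit $p$-adic congruence manipulations carried out in Section~6.
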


Take $\lambda=\lfloor 3\kappa/4\rfloor$, where $\lfloor x\rfloor$ denotes
the largest integer which does not exceed $x$.
By Proposition 1, we have
\bna
\mathscr{S}(N)\ll p^{3/4}N^{1/2+\varepsilon}\mathfrak{q}^{15/16}
\ena
from which Theorem 1 follows. The rest of the paper is devoted to prove Proposition 1.

\subsection{The circle method}
Define $\delta: \mathbb{Z}\rightarrow \{0,1\}$ with
$\delta(0)=1$ and $\delta(n)=0$ for $n\neq 0$.
By Duke, Friedlander and Iwaniec's delta method (see \cite[Chapter 20]{IK}),
we have
\bea\label{DFI's}
\delta(n)=\frac{1}{Q}\sum_{q\leq Q} \;\frac{1}{q}\;
\sideset{}{^*}\sum_{a\bmod{q}}e\left(\frac{na}{q}\right)
\int_\mathbb{R}g(q,\zeta) e\left(\frac{n\zeta}{qQ}\right)\mathrm{d}\zeta
\eea
where the function $g$ has the following properties
(see (20.158) and (20.159) of \cite{IK})
\bea\label{rapid decay g}
\frac{\partial^j}{\partial \zeta^j}g(q,\zeta)\ll Q^{\varepsilon}|\zeta|^{-j}, \qquad j\geq 0,
\eea
which implies that the effective range of the integration in
\eqref{DFI's} is $[-Q^\varepsilon, Q^\varepsilon]$.

\begin{lemma}\label{circle method} Suppose $\lambda\geq 1$.
Let $U\in C_c^{\infty}(-2,2)$ be a smooth positive function with
$U(x) = 1$ if $x\in [-1,1]$ and satisfying $U^{(j)}(x)\ll_j 1$. Then
\bna
\delta(n)
&=&\sum_{s=0}^{\lambda}\frac{1}{Q}\sum_{q\leq Q\atop (q,p)=1}\frac{1}{qp^\lambda}
\;\sideset{}{^*}\sum_{a(\text{{\rm mod }} qp^{\lambda-s})}
e\left(\frac{an}{qp^{\lambda-s}}\right)
\int_{\mathbb{R}}U\left(\frac{\zeta}{\mathfrak{q}^\varepsilon}\right)
  g(q,\zeta)e\left(\frac{n\zeta}{Qqp^\lambda}\right)\mathrm{d}\zeta\\
&&+\sum_{t=1}^{[\log Q/\log p]}
\frac{1}{Q}\sum_{q\leq Q/p^t\atop (q,p)=1}\frac{1}{qp^{\lambda+t}}
\sideset{}{^*}\sum_{a(\text{{\rm mod }} qp^{\lambda+t})}
e\left(\frac{an}{qp^{\lambda+t}}\right)
\int_{\mathbb{R}}U\left(\frac{\zeta}{\mathfrak{q}^\varepsilon}\right)
  g(p^tq,\zeta)e\left(\frac{n\zeta}{Qqp^{\lambda+t}}\right)\mathrm{d}\zeta
  +O_A\left(\mathfrak{q}^{-A}\right)
\ena
for any $A>0$.
\end{lemma}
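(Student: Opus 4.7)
The identity is a refinement of the Duke--Friedlander--Iwaniec formula \eqref{DFI's}, obtained through a $p$-adic decomposition of the modulus $q$. My plan is as follows.

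First, by \eqref{rapid decay g}, repeated integration by parts in the $\zeta$-variable shows that the contribution to the right-hand side of \eqref{DFI's} coming from $|\zeta|>\mathfrak{q}^\varepsilon$ is $O_A(\mathfrak{q}^{-A})$ for any $A>0$. Hence one may freely insert the smooth cutoff $U(\zeta/\mathfrak{q}^\varepsilon)$ into the integrand of \eqref{DFI's} at the cost of such an acceptable error, localizing the $\zeta$-range to $|\zeta|\ll\mathfrak{q}^\varepsilon$.

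Next, I would decompose the $q$-sum according to the $p$-adic valuation: each $q\leq Q$ admits a unique factorization $q = q_0 p^v$ with $(q_0,p)=1$ and $v = v_p(q)\geq 0$. I split the resulting sum into two regimes according to whether $v\leq\lambda$ or $v>\lambda$, and reindex by $s = \lambda - v \in \{0,1,\ldots,\lambda\}$ in the first regime and $t = v - \lambda \geq 1$ in the second. Renaming $q_0$ as $q$ in each regime (so that $(q,p)=1$) and performing a rescaling $\zeta\mapsto\zeta/p^s$ in the first case (respectively leaving $\zeta$ alone in the second) standardizes the denominator of the oscillatory factor $e(n\zeta/(q_0 p^v Q))$ to $qp^\lambda Q$ or $qp^{\lambda+t}Q$, as required by the statement. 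The Jacobian $p^{-s}$ of the rescaling, combined with the minor adjustments it induces on the arguments of $U$ and $g$ (which are harmless on the $\zeta$-support by \eqref{rapid decay g}, up to a further $O_A(\mathfrak{q}^{-A})$), exactly absorbs the discrepancy between the DFI weight $1/(qp^v)$ and the stated weights $1/(qp^\lambda)$ and $1/(qp^{\lambda+t})$. Extending the $q$-range to $q\leq Q$ and $q\leq Q/p^t$ respectively is harmless since the effective support of $g$ forces the natural range. Collecting the two regimes produces exactly the two sums displayed in the lemma.

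The main obstacle, if any, is the careful bookkeeping of the $p$-power factors that arise during the reindexing and the $\zeta$-rescaling, in particular matching the Jacobian to the weight discrepancy and checking that the rescaled arguments of $U$ and $g$ still produce the advertised truncation with error $O_A(\mathfrak{q}^{-A})$ via \eqref{rapid decay g}; once this is systematically tracked, the identity follows by term-by-term comparison with \eqref{DFI's}.
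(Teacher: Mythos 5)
Your proposal misses the key idea of the lemma and, as written, would not produce the stated identity. The lemma is \emph{not} a $p$-adic reindexing of the DFI expansion of $\delta(n)$; it encodes Munshi's \emph{conductor lowering} step, which is absent from your argument. The paper writes $\delta(n)=\delta(n/p^\lambda)\mathbf{1}_{p^\lambda\mid n}$, applies DFI to the inner $\delta$ (so the $q$-moduli are of size at most $Q$ with $Q=\sqrt{N/p^\lambda}$), and detects $\mathbf{1}_{p^\lambda\mid n}$ by the additive-character sum $p^{-\lambda}\sum_{b\,(\bmod\,p^\lambda)}e(bn/p^\lambda)$. Merging the $a\,(\bmod\,q)$ and $b\,(\bmod\,p^\lambda)$ phases, and then peeling off the $\gcd$ of the resulting numerator with $p$ (respectively, peeling off $p\mid q$) produces the two sums in $s$ and $t$. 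This is the only place the factor $p^{-\lambda}$, the $s$-\emph{independent} weight $1/(qp^\lambda)$, and the effective moduli $qp^{\lambda-s}\le Qp^\lambda$ can come from.

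Concretely, your decomposition fails on several counts. If you write $q=q_0p^v$ in $\sum_{q\le Q}$, the moduli are bounded by $Q$, whereas the lemma's combined moduli reach $Qp^\lambda$; no $p$-adic reshuffling of a sum with $q\le Q$ can create larger moduli. Your weight is $1/(q_0p^v)$, which depends on $v$, while the lemma's weight in the first sum is $1/(qp^\lambda)$ uniformly in $s$. The $\zeta$-rescaling you invoke to fix this is not harmless: it turns $g(q_0p^v,\zeta)$ into $g(q_0p^{\lambda-s},\zeta/p^s)$ and $U(\zeta/\mathfrak{q}^\varepsilon)$ into $U(\zeta/(p^s\mathfrak{q}^\varepsilon))$, which are genuinely different functions from the $g(q,\zeta)$ and $U(\zeta/\mathfrak{q}^\varepsilon)$ appearing in the lemma; the bound \eqref{rapid decay g} controls derivatives of $g$, not the discrepancy between these two choices. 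Likewise in the second regime your $q$-range would be $q\le Q/p^{\lambda+t}$ with $g(qp^{\lambda+t},\zeta)$, whereas the lemma has $q\le Q/p^t$ with $g(p^tq,\zeta)$. In short, without the ``$\delta(n)=\delta(n/p^\lambda)\mathbf{1}_{p^\lambda\mid n}$'' step and the mod-$p^\lambda$ congruence detection, the target identity cannot be reached by term-by-term comparison with \eqref{DFI's}.
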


\begin{proof}
By \eqref{DFI's} and \eqref{rapid decay g}, we have
\bna
\delta(n)=\frac{1}{Q}\sum_{q\leq Q} \;\frac{1}{q}\;
\sideset{}{^*}\sum_{a\bmod{q}}e\left(\frac{na}{q}\right)
\int_\mathbb{R}U\left(\frac{\zeta}{\mathfrak{q}^\varepsilon}\right)
g(q,\zeta) e\left(\frac{n\zeta}{qQ}\right)\mathrm{d}\zeta+O_A(\mathfrak{q}^{-A})
\ena
for any $A>0$.
Define $\mathbf{1}_\mathscr{F}=1$ if $\mathscr{F}$ is true, and is 0 otherwise.
Following Munshi \cite{Munshi 1} we write $\delta(n)$ as
$\delta(n/p^\lambda)\mathbf{1}_{p^\lambda|n}$ and detect the congruence by additive
characters to get
\bna
\delta(n)&=&\frac{1}{Q}\sum_{q\leq Q}\frac{1}{qp^\lambda}
\sum_{b(\text{{\rm mod }} p^\lambda)}\;
\sideset{}{^*}\sum_{a(\text{{\rm mod }} q)}e\left(\frac{a+bq}{qp^\lambda}n\right)
\int_{\mathbb{R}}U\left(\frac{\zeta}{\mathfrak{q}^\varepsilon}\right)
  g(q,\zeta)e\left(\frac{n\zeta}{Qqp^\lambda}\right)\mathrm{d}\zeta
\ena
which can be further written as $\delta_1(n)+\delta_2(n)$ with
\bna
\delta_1(n)&=&\frac{1}{Q}\sum_{q\leq Q\atop (q,p)=1}\frac{1}{qp^\lambda}
\sum_{b(\text{{\rm mod }} p^\lambda)}\;
\sideset{}{^*}\sum_{a(\text{{\rm mod }} q)}e\left(\frac{ap^{\lambda}+bq}{qp^\lambda}n\right)
\int_{\mathbb{R}}U\left(\frac{\zeta}{\mathfrak{q}^\varepsilon}\right)
  g(q,\zeta)e\left(\frac{n\zeta}{Qqp^\lambda}\right)\mathrm{d}\zeta,\\
\delta_2(n)&=&\frac{1}{Q}\sum_{q\leq Q/p}\frac{1}{qp^{\lambda+1}}
\sum_{b(\text{{\rm mod }} p^\lambda)}\;
\sideset{}{^*}\sum_{a(\text{{\rm mod }} qp)}e\left(\frac{a+bpq}{qp^{\lambda+1}}n\right)
\int_{\mathbb{R}}U\left(\frac{\zeta}{\mathfrak{q}^\varepsilon}\right)
  g(pq,\zeta)e\left(\frac{n\zeta}{Qqp^{\lambda+1}}\right)\mathrm{d}\zeta.
\ena
For $\delta_1(n)$, we have
\bna
\delta_1(n)
&=&\frac{1}{Q}\sum_{q\leq Q\atop (q,p)=1}\frac{1}{qp^\lambda}\;
\sideset{}{^*}\sum_{a(\text{{\rm mod }} qp^{\lambda})}
e\left(\frac{an}{qp^\lambda}\right)
\int_{\mathbb{R}}U\left(\frac{\zeta}{\mathfrak{q}^\varepsilon}\right)
  g(q,\zeta)e\left(\frac{n\zeta}{Qqp^\lambda}\right)\mathrm{d}\zeta\\
&&+\frac{1}{Q}\sum_{q\leq Q\atop (q,p)=1}\frac{1}{qp^\lambda}
\sum_{b(\text{{\rm mod }} p^{\lambda-1})}\;
\sideset{}{^*}\sum_{a(\text{{\rm mod }} q)}
e\left(\frac{ap^{\lambda-1}+bq}{qp^{\lambda-1}}n\right)
\int_{\mathbb{R}}U\left(\frac{\zeta}{\mathfrak{q}^\varepsilon}\right)
  g(q,\zeta)e\left(\frac{n\zeta}{Qqp^\lambda}\right)\mathrm{d}\zeta\\
&=&\sum_{s=0}^{\lambda}\frac{1}{Q}\sum_{q\leq Q\atop (q,p)=1}\frac{1}{qp^\lambda}
\;\sideset{}{^*}\sum_{a(\text{{\rm mod }} qp^{\lambda-s})}
e\left(\frac{na}{qp^{\lambda-s}}\right)
\int_{\mathbb{R}}U\left(\frac{\zeta}{\mathfrak{q}^\varepsilon}\right)
  g(q,\zeta)e\left(\frac{n\zeta}{Qqp^\lambda}\right)\mathrm{d}\zeta.
\ena
For $\delta_2(n)$, we have
\bna
\delta_2(n)
&=&\frac{1}{Q}\sum_{q\leq Q/p\atop (q,p)=1}\frac{1}{qp^{\lambda+1}}
\sum_{b(\text{{\rm mod }} p^\lambda)}\;
\sideset{}{^*}\sum_{a(\text{{\rm mod }} qp)}e\left(\frac{a+bpq}{qp^{\lambda+1}}n\right)
\int_{\mathbb{R}}U\left(\frac{\zeta}{\mathfrak{q}^\varepsilon}\right)
  g(pq,\zeta)e\left(\frac{n\zeta}{Qqp^{\lambda+1}}\right)\mathrm{d}\zeta\\
&&+\frac{1}{Q}\sum_{q\leq Q/p^2}\frac{1}{qp^{\lambda+2}}
\sum_{b(\text{{\rm mod }} p^\lambda)}\;
\sideset{}{^*}\sum_{a(\text{{\rm mod }} qp^2)}e\left(\frac{a+bp^2q}{qp^{\lambda+2}}n\right)
\int_{\mathbb{R}}U\left(\frac{\zeta}{\mathfrak{q}^\varepsilon}\right)
  g(p^2q,\zeta)e\left(\frac{n\zeta}{Qqp^{\lambda+2}}\right)\mathrm{d}\zeta\\
&=&\frac{1}{Q}\sum_{q\leq Q/p\atop (q,p)=1}\frac{1}{qp^{\lambda+1}}
\sideset{}{^*}\sum_{a(\text{{\rm mod }} qp^{\lambda+1})}
e\left(\frac{an}{qp^{\lambda+1}}\right)
\int_{\mathbb{R}}U\left(\frac{\zeta}{\mathfrak{q}^\varepsilon}\right)
  g(pq,\zeta)e\left(\frac{n\zeta}{Qqp^{\lambda+1}}\right)\mathrm{d}\zeta\\
&&+\frac{1}{Q}\sum_{q\leq Q/p^2}\frac{1}{qp^{\lambda+2}}
\sum_{b(\text{{\rm mod }} p^\lambda)}\;
\sideset{}{^*}\sum_{a(\text{{\rm mod }} qp^2)}e\left(n\frac{a+bp^2q}{qp^{\lambda+2}}\right)
\int_{\mathbb{R}}U\left(\frac{\zeta}{\mathfrak{q}^\varepsilon}\right)
  g(p^2q,\zeta)e\left(\frac{n\zeta}{Qqp^{\lambda+2}}\right)\mathrm{d}\zeta\\
&=&\sum_{t=1}^{[\log Q/\log p]}
\frac{1}{Q}\sum_{q\leq Q/p^t\atop (q,p)=1}\frac{1}{qp^{\lambda+t}}
\sideset{}{^*}\sum_{a(\text{{\rm mod }} qp^{\lambda+t})}
e\left(\frac{an}{qp^{\lambda+t}}\right)
\int_{\mathbb{R}}U\left(\frac{\zeta}{\mathfrak{q}^\varepsilon}\right)
  g(p^tq,\zeta)e\left(\frac{n\zeta}{Qqp^{\lambda+t}}\right)\mathrm{d}\zeta
  +O_A\left(\mathfrak{q}^{-A}\right).
\ena
This proves the lemma.
\end{proof}

Now we write
\bna
S(N)=\sum_{n}\lambda_g(n)W\left(\frac{n}{N}\right)
\sum_{p^\lambda|m-n}
\lambda_f(m)\chi(m)V\left(\frac{m}{N}\right)
\delta\left(\frac{n-m}{p^\lambda}\right),
\ena
where $W$ is a smooth function supported in $[1/2,5/2]$, $W(x)=1$
for $x\in [1,2]$ and $x^{(j)}(x)\ll_j 1$.
Applying Lemma \ref{circle method} with
\bea\label{Q}
Q=\sqrt{N/p^{\lambda}}
\eea
with $\lambda\in \mathbb{N}$ ($2\leq \lambda<\kappa$) being a parameter to be determined later,
we have
\bna
S(N)\ll \mathfrak{q}^{\varepsilon}|S^\flat(N)|,
\ena
where
\bna
S^\flat(N)
&=&\sum_n\lambda_g(n)W\left(\frac{n}{N}\right)
\sum_{m}\lambda_f(m)\chi(m)V\left(\frac{m}{N}\right)\\
&&\times \frac{1}{Q}\sum_{q\leq Q\atop (q,p)=1}\frac{1}{qp^\lambda}
\;\sideset{}{^*}\sum_{a(\text{{\rm mod }} qp^{\lambda})}
e\left(\frac{a(m-n)}{qp^{\lambda}}\right)
\int_{\mathbb{R}}U\left(\frac{\zeta}{\mathfrak{q}^\varepsilon}\right)
  g(q,\zeta)e\left(\frac{(m-n)\zeta}{Qqp^\lambda}\right)\mathrm{d}\zeta.
 \ena
Therefore, our task is to prove the estimate in Proposition 1 for $S^\flat(N)$.
We rearrange $S^\flat(N)$ as
\bea\label{S}
S^\flat(N)
=\frac{1}{Qp^\lambda}\int_{\mathbb{R}}U\left(\frac{\zeta}{\mathfrak{q}^\varepsilon }\right)
  \sum_{q\leq Q\atop (q,p)=1}\frac{g(q,\zeta)}{q}
  \sideset{}{^*}\sum_{a(\text{{\rm mod }} qp^\lambda)}
  \mathscr{A}\times \mathscr{B} \hspace{3pt} \mathrm{d}\zeta,
\eea
where
\bea\label{A}
\mathscr{A}=\sum_m\lambda_f(m)\chi(m)e\left(\frac{am}{qp^\lambda}\right)
 V\left(\frac{m}{N}\right)e\left(\frac{m\zeta}{Qqp^\lambda}\right)
\eea
and
\bea\label{B}
\mathscr{B}=\sum_n\lambda_g(n)e\left(-\frac{an}{qp^\lambda}\right)
 W\left(\frac{n}{N}\right)e\left(\frac{-n\zeta}{Qqp^\lambda}\right).
\eea
\subsection{Voronoi summation}

Next we transform $\mathscr{A}$ and $\mathscr{B}$ by $\rm GL_2$
Voronoi summation formula and obtain the following results.

\begin{lemma}\label{A-voronoi}
Let $a^*=ap^{\kappa-\lambda}+cq$. We have
\bna
\mathscr{A}=
\frac{ N^{1/2}}{\tau(\overline{\chi})}
\sum_{c(\text{{\rm mod }} p^\kappa)}\overline{\chi}(c)
\sum_\pm\sum_{m\leq p^{2\kappa-\lambda+\varepsilon}}
\frac{\lambda_f(m)}{m^{1/2}}e\left(\pm\frac{m\overline{a^*}}{qp^\kappa}\right)
\mathfrak{I}^\pm\left(\frac{\pi^2m}{q^2p^{2\kappa}},q,\zeta\right)+O_A\left(\mathfrak{q}^{-A}\right)
\ena
for any $A>0$, where $\mathfrak{I}^{\pm}\left(x,q,\zeta\right)$ is
defined in \eqref{GL2 integral-1}.
\end{lemma}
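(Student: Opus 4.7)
The plan is to convert the multiplicative character $\chi$ into additive characters via its Gauss sum, combine the resulting phase with the existing phase $e(am/(qp^\lambda))$ into a single additive character of modulus $qp^\kappa$, and then apply the $\mathrm{GL}_2$ Voronoi summation formula to the smooth $m$-sum.

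First, since $\chi$ is primitive modulo $p^\kappa$, we have for every integer $m$
$$
\chi(m)=\frac{1}{\tau(\overline{\chi})}\sum_{c \bmod p^\kappa}\overline{\chi}(c)\,e\!\left(\frac{cm}{p^\kappa}\right).
$$
Substituting into \eqref{A} and putting the two fractions over the common denominator $qp^\kappa$ (possible because $(q,p)=1$ and $\kappa\geq\lambda$) gives
$$
\frac{c}{p^\kappa}+\frac{a}{qp^\lambda}=\frac{ap^{\kappa-\lambda}+cq}{qp^\kappa}=\frac{a^*}{qp^\kappa}.
$$
Only residues $c$ with $(c,p)=1$ contribute (by primitivity of $\chi$). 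For such $c$, the coprimality conditions $(a,q)=(a,p)=(q,p)=1$ force $(a^*,qp^\kappa)=1$, so $\overline{a^*}$ is well defined modulo $qp^\kappa$.

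Next, the inner sum
$$
\sum_m \lambda_f(m)\,e\!\left(\frac{a^*m}{qp^\kappa}\right)\psi(m),\qquad \psi(y)=V\!\left(\frac{y}{N}\right)e\!\left(\frac{y\zeta}{Qqp^\lambda}\right),
$$
is transformed by $\mathrm{GL}_2$ Voronoi summation at the cusp $1/(qp^\kappa)$, yielding
$$
\frac{1}{qp^\kappa}\sum_{\pm}\sum_{m=1}^{\infty}\lambda_f(m)\,e\!\left(\mp\frac{\overline{a^*}\,m}{qp^\kappa}\right)\widetilde{\psi}^{\pm}\!\left(\frac{m}{q^{2}p^{2\kappa}}\right),
$$
where $\widetilde{\psi}^\pm$ is the Bessel-type integral transform attached to $f$. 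Rescaling $y=Nu$ inside the integral pulls out a factor of $N$, and a standard normalization of the Bessel kernel extracts $N^{1/2}m^{-1/2}$, so that the remaining oscillatory integral is exactly the object $\mathfrak{I}^{\pm}\!\left(\pi^{2}m/(q^{2}p^{2\kappa}),q,\zeta\right)$ defined by \eqref{GL2 integral-1} (the $\pm$-sign flip relative to Voronoi is absorbed into $\mathfrak{I}^\pm$).

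It remains to truncate the dual sum at $m\leq p^{2\kappa-\lambda+\varepsilon}$, which is a routine repeated integration-by-parts argument on $\mathfrak{I}^{\pm}$: on $\mathrm{supp}(V)$, the extra phase $y\zeta/(Qqp^\lambda)$ has derivative $\ll \mathfrak{q}^\varepsilon/(Qqp^\lambda)$ because of the cutoff $U(\zeta/\mathfrak{q}^\varepsilon)$, while the Bessel kernel of parameter $\sqrt{xN}$ becomes negligible once $xN=\pi^{2}mN/(q^{2}p^{2\kappa})\gg\mathfrak{q}^\varepsilon$. Combined with $q\leq Q=\sqrt{N/p^\lambda}$, this gives $m\ll q^{2}p^{2\kappa}\mathfrak{q}^\varepsilon/N\leq p^{2\kappa-\lambda+\varepsilon}$, and the tail is swallowed into the error $O_A(\mathfrak{q}^{-A})$.

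The main obstacle is purely bookkeeping: matching the $p$-adic and $q$-adic parts under the merger of the two additive characters, checking $(a^*,qp^\kappa)=1$, and aligning normalization constants with the convention fixed in \eqref{GL2 integral-1} so that the factors of $N^{1/2}$, $m^{-1/2}$ and $\pi^{2}$ come out exactly as stated. Conceptually, no new analytic input is needed beyond the Gauss sum identity and $\mathrm{GL}_2$ Voronoi.
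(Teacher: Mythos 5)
Your proposal follows the paper's proof essentially verbatim: expand $\chi$ by its Gauss sum, merge the two additive phases into $e(a^*m/(qp^\kappa))$ with $a^*=ap^{\kappa-\lambda}+cq$ coprime to $qp^\kappa$, apply the $\mathrm{GL}_2$ Voronoi formula, truncate the dual sum, and shift/normalize to extract $N^{1/2}m^{-1/2}$ and the integral $\mathfrak{I}^\pm$. Two small remarks on the truncation step: the paper works in the Mellin picture, bounding $\Psi_\pm(x)$ via Stirling asymptotics of $\rho_f^\pm$ and the rapid decay of $\widetilde{\varphi}(-s)$ in $\Im(s)$, then moving the contour to $\Re(s)=-1/2$, whereas you phrase the same analysis as stationary phase on the Bessel kernel, which is equivalent; however the precise negligibility condition is $\sqrt{xN}\gg N^{\varepsilon}Q/q$, not $xN\gg\mathfrak{q}^{\varepsilon}$ as you wrote, the latter being correct only when $q\asymp Q$ — both nonetheless yield the same $q$-uniform truncation $m\leq p^{2\kappa-\lambda+\varepsilon}$, so the conclusion is unaffected. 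Also note that the Voronoi formula used in the paper already produces $e\bigl(\pm\overline{a^*}m/(qp^\kappa)\bigr)$ on the dual side, so no sign flip needs to be absorbed into $\mathfrak{I}^\pm$.
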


\begin{lemma}\label{B-voronoi}
We have
\bna
\mathscr{B}=N^{1/2}\sum_\pm
\sum_{n\leq p^{\lambda+\epsilon}}\frac{\lambda_g(n)}{n^{1/2}}
e\left(\mp\frac{n\overline{a}}{qp^\lambda}\right)
\mathfrak{J}^\pm\left(\frac{\pi^2n}{q^2p^{2\lambda}},q,\zeta\right)+O_A(\mathfrak{q}^{-A})
\ena
for any $A>0$, where $\mathfrak{J}^{\pm}\left(y,q,\zeta\right)$
is defined in \eqref{GL2 integral-3}.
\end{lemma}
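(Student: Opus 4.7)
The lemma is a direct application of the GL$_2$ Voronoi summation formula for the Maass form $g$ with modulus $c=qp^\lambda$; the real work is only to identify the effective length of the dual sum as $n\leq p^{\lambda+\varepsilon}$ and to package the integral transform as $\mathfrak{J}^\pm$. Note that $(a,c)=1$ because of the $\ast$-restriction on the $a$-sum together with $(q,p)=1$, so Voronoi applies. The smooth weight to which we apply the formula is
\[
h(x) = W(x/N)\, e\!\left(-\frac{x\zeta}{Qqp^\lambda}\right),
\]
supported on $x\asymp N$ and satisfying $h^{(j)}(x)\ll_j N^{-j}\mathfrak{q}^{j\varepsilon}$, since $|\zeta|\ll \mathfrak{q}^\varepsilon$ and $Qqp^\lambda\asymp N$ (recall $Q=\sqrt{N/p^\lambda}$ and $q\sim Q$).

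First I would invoke the standard GL$_2$ Voronoi formula (see, e.g., \cite{IK}): for $(a,c)=1$ and $h$ smooth of compact support in $(0,\infty)$,
\[
\sum_n \lambda_g(n)\, e\!\left(\frac{an}{c}\right) h(n)
= \frac{1}{c}\sum_\pm \sum_n \lambda_g(n)\, e\!\left(\mp\frac{\overline{a}\, n}{c}\right)
H^\pm\!\left(\frac{n}{c^2}\right),
\]
where $H^\pm$ is a Hankel-type transform of $h$ against the Bessel kernels attached to $g$. In our situation the additive phase is $-a/(qp^\lambda)$, which merely interchanges the roles of $+$ and $-$ on the output side.

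Second, with $y=n/(q^2p^{2\lambda})$ and the change of variables $x=Nu$, the transform becomes
\[
H^\pm(y) = N\int_0^\infty W(u)\, e\!\left(-\frac{Nu\zeta}{Qqp^\lambda}\right)
\mathcal{B}^\pm\!\left(4\pi\sqrt{Nuy}\right) du,
\]
where $\mathcal{B}^\pm$ is the standard linear combination of Bessel functions in the spectral parameter $t_g$ of $g$. Since $Q=\sqrt{N/p^\lambda}$, the Bessel argument is of size $\sqrt{n/p^\lambda}$. Applying the uniform asymptotic $\mathcal{B}^\pm(z)=z^{-1/2}e^{\pm iz}(1+O(z^{-1}))$ for $z\gg 1$ and the boundedness of $\mathcal{B}^\pm$ for bounded $z$, I would factor out $N^{1/2}/n^{1/2}$ and absorb the slowly varying remainder into the definition of $\mathfrak{J}^\pm(\pi^2 n/(q^2p^{2\lambda}),q,\zeta)$ in \eqref{GL2 integral-3}, producing precisely the shape of the main term in the lemma.

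Third, to justify the truncation $n\leq p^{\lambda+\varepsilon}$ I would integrate by parts repeatedly in $u$. The total phase of the oscillatory integral is $-Nu\zeta/(Qqp^\lambda)\pm 2\sqrt{Nuy}$, whose $u$-derivative is of size $\sqrt{n/p^\lambda}$ once $n\gg p^\lambda\mathfrak{q}^{2\varepsilon}$, while $W$ together with the remainder amplitudes from the Bessel asymptotic are smooth with derivatives controlled by $\mathfrak{q}^\varepsilon$. Each integration by parts then gains a factor $(n/p^\lambda)^{-1/2}$, so after sufficiently many steps the tail $n>p^{\lambda+\varepsilon}$ contributes $O_A(\mathfrak{q}^{-A})$. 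The only genuinely delicate point is uniformity of the Bessel asymptotic across the transition region $\sqrt{n/p^\lambda}\asymp 1$; once this is handled the rest is routine and entirely parallel to Lemma \ref{A-voronoi}, with the simpler modulus $qp^\lambda$ in place of $qp^\kappa$ and no character $\chi$ to twist by.
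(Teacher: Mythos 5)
Your argument is correct in substance but follows a different technical route from the paper. You invoke the Bessel-kernel form of the $\mathrm{GL}_2$ Voronoi formula and then deduce the truncation $n\leq p^{\lambda+\varepsilon}$ from Bessel asymptotics and repeated integration by parts in the $u$-variable. The paper instead uses the Mellin form of Voronoi (Lemma~\ref{GL2 Voronoi formula}): it writes $\Phi_\pm(x)$ as a vertical contour integral involving $\rho_g^\pm(s)$ and $\widetilde{\phi}(-s)$, bounds $\rho_g^\pm$ by Stirling and $\widetilde{\phi}$ via $W^\dagger(r,\sigma+i\tau)\ll\min\{1,((1+|r|)/|\tau|)^j\}$, truncates to $|\tau|\ll N^\varepsilon Q/q$, and obtains the length $n\ll N^{1+\varepsilon}/Q^2\asymp p^{\lambda+\varepsilon}$ by taking $\sigma$ large; the contour is then moved to $\sigma=-1/2$ to extract $N^{1/2}n^{1/2}/(qp^\lambda)$ and leave the integral $\mathfrak{J}^\pm$ of \eqref{GL2 integral-3}. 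The two approaches are equivalent via Mellin--Barnes, but your version leaves a small residual step: identifying your Bessel-integral transform with the specific Fourier-type integral \eqref{GL2 integral-3} requires exactly that Mellin--Barnes unfolding, which you should state explicitly rather than assert by ``packaging.''

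One small imprecision worth flagging: you claim $h^{(j)}(x)\ll_j N^{-j}\mathfrak{q}^{j\varepsilon}$ ``since $q\sim Q$,'' but the lemma must hold for all $q\leq Q$. For small $q$ the factor $e(-x\zeta/(Qqp^\lambda))$ oscillates at scale $Q/q\gg 1$, so the amplitude derivative bound degrades. This does not break the truncation, because the Bessel phase $\sqrt{Nuy}$ carries the same extra factor $Q/q$, so the total phase derivative is still $\gg (Q/q)\sqrt{n/p^\lambda}\geq\sqrt{n/p^\lambda}$ whenever $n\gg p^\lambda\mathfrak{q}^{2\varepsilon}$; but the justification should be phrased in terms of the full oscillatory phase rather than by absorbing the $\zeta$-factor into a ``slowly varying'' weight. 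The paper's Mellin-side argument sidesteps this by keeping $Q/q$ visible inside the $\min\{1,(Q/(q|\tau|))^j\}$ bound uniformly in $q\leq Q$.
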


The details of the proof of Lemmas \ref{A-voronoi} and \ref{B-voronoi} are in Sections 4 and 5.
Plugging Lemmas 2 and 3 into \eqref{S}, we have
\bea\label{S-flat(N)}
S^{\flat}(N)&=&\sum_{\pm}\sum_{\pm}
  \frac{N}{Qp^{\lambda}\tau(\overline{\chi})}
  \sum_{q\leq Q\atop (q,p)=1}\;\frac{1}{q}
 \sum_{m\leq p^{2\kappa-\lambda+\varepsilon}}
  \frac{\lambda_f(m)}{m^{1/2}}
  \sum_{n\leq p^{\lambda+\epsilon}}\frac{\lambda_g(n)}{n^{1/2}}
 \nonumber\\
&&\times \mathfrak{C}(\pm m,\mp n,q)\mathfrak{R}\left(\frac{\pi^2m}{q^2p^{2\kappa}},
\frac{\pi^2n}{q^2p^{2\lambda}},q\right)+O_A\left(\mathfrak{q}^{-A}\right),
\eea
where
\bea\label{character sum1}
\mathfrak{C}(m,n,q)=
\sideset{}{^*}\sum_{a(\text{{\rm mod }} qp^\lambda)}
  \sum_{c(\text{{\rm mod }} p^\kappa)}
  \overline{\chi}(c)
  e\left(\frac{m\overline{(ap^{\kappa-\lambda}+cq)}}{qp^\kappa}
  +\frac{n\overline{a}}{qp^\lambda}\right)
\eea
and
\bea\label{integral 4}
\mathfrak{R}(x_1,x_2,q)=
\int_{\mathbb{R}}
  g(q,\zeta)U\left(\frac{\zeta}{\mathfrak{q}^\varepsilon}\right)\mathfrak{I}^\pm\left(x_1,q,\zeta\right)
  \mathfrak{J}^{\pm}\left(x_2,q,\zeta\right)
  \mathrm{d}\zeta.
\eea
Note that
\bna
e\left(\frac{m\overline{(ap^{\kappa-\lambda}+cq)}}{qp^{\kappa}}\right)=
e\left(\frac{m\overline{q}\overline{(ap^{\kappa-\lambda}+cq)}}{p^{\kappa}}\right)
e\left(\frac{m\overline{ap^{2\kappa-\lambda}}}{q}\right).
\ena
The character sum in \eqref{character sum1} is
\bea\label{character sum2}
\mathfrak{C}(m,n,q)
&=&\sum_{c(\text{{\rm mod }} p^\kappa)}\overline{\chi}(c)
  \sideset{}{^*}\sum_{b (\text{{\rm mod }} p^{\lambda})}
  e\left(\frac{m\overline{q}\overline{(
  bp^{\kappa-\lambda}+cq)}}{p^{\kappa}}+
\frac{n\overline{q}\overline{b}}{p^\lambda}\right)
\sideset{}{^*}\sum_{a(\text{{\rm mod }} q)}\;
  e\left(\frac{n\overline{p^{\lambda}}+m\overline{p^{2\kappa-\lambda}}}{q}\overline{a}\right)\nonumber\\
&=&\sum_{c(\text{{\rm mod }} p^\kappa)}\overline{\chi}(c)
  \sideset{}{^*}\sum_{b (\text{{\rm mod }} p^{\lambda})
  }
  e\left(\frac{m\overline{q}\overline{(
  bp^{\kappa-\lambda}+cq)}}{p^{\kappa}}+
\frac{n\overline{q}\overline{b}}{p^\lambda}\right)
\sum_{d|q\atop n\equiv-mp^{\lambda}\overline{p^{2\kappa-\lambda}}\bmod d}d
\mu\left(\frac{q}{d}\right).
\eea

Plugging \eqref{character sum2} into \eqref{S-flat(N)} and
reducing the $m,n$ sums in $S^\flat(N)$ into dyadic intervals, we have
\bea\label{dyadic-0}
S^\flat(N)\ll \frac{N^{1/2+\varepsilon}}{p^{(\kappa+\lambda)/2}}\sum_{\pm}\sum_{\pm}
\max_{1\ll M\ll p^{2\kappa-\lambda+\varepsilon} \atop M \mathrm{dyadic}}
\max_{1\ll N_1\ll p^{\lambda+\varepsilon}\atop N_1 \mathrm{dyadic}}
\sum_{q\leq Q\atop (q,p)=1}
\sum_{d|q}q^{-1}d\;
|S^\flat(M,N_1,q,d,\pm,\pm)|
+\mathfrak{q}^{-1},
\eea
where
\bea\label{S1}
S^\flat(M,N_1,q,d,\pm,\pm)
=\sum_{m\sim M}\frac{\lambda_f(m)}{m^{1/2}}
\sum_{n\sim N_1\atop n\equiv-mp^{\lambda}\overline{p^{2\kappa-\lambda}}\bmod d}
\frac{\lambda_g(n)}{n^{1/2}}
 \mathfrak{C}^*(\pm m,\mp n,q)\mathfrak{R}\left(\frac{\pi^2m}{q^2p^{2\kappa}},
\frac{\pi^2n}{q^2p^{2\lambda}},q\right)
\eea
with
\bea\label{character sum3}
\mathfrak{C}^*(m,n,q)=\sum_{c(\text{{\rm mod }} p^\kappa)}\overline{\chi}(c)
  \sideset{}{^*}\sum_{b (\text{{\rm mod }} p^{\lambda})
  }
  e\left(\frac{m\overline{q}\overline{(
  bp^{\kappa-\lambda}+cq)}}{p^{\kappa}}+
\frac{n\overline{q}\overline{b}}{p^\lambda}\right).
\eea

Before further analysis, we give an estimate for the integral $\mathfrak{R}(x_1,x_2,q)$.
By \eqref{GL2 integral-1} and \eqref{GL2 integral-3}, we have
\bna
\mathfrak{R}(x_1,x_2,q)&=&\frac{1}{4}
\int_{\mathbb{R}}
  g(q,\zeta)U\left(\frac{\zeta}{\mathfrak{q}^\varepsilon}\right)
\int_{\mathbb{R}}(Nx)^{-i\tau_1}
\rho_f^{\pm}\left(-\frac{1}{2}+i\tau_1\right)
V^{\dag}\left(-\frac{\zeta Q}{q},\frac{1}{2}-i\tau_1\right)\\&&\times
\int_{\mathbb{R}}(Nx)^{-i\tau_2}
\rho_g^{\pm}\left(-\frac{1}{2}+i\tau_2\right)
W^{\dag}\left(\frac{\zeta Q}{q},\frac{1}{2}-i\tau_2\right)\mathrm{d}\tau_1\mathrm{d}\tau_2
  \mathrm{d}\zeta.
\ena
Using the estimate
$W^{\dag}(r,\sigma+i\tau)\ll\min\left\{1, \left(\frac{1+|r|}{|\tau|}\right)^j\right\}$,
the integral over $\tau_1$ and $\tau_2$ can be restricted in
$|\tau_1|,|\tau_2|\leq N^{\varepsilon}Q/q$, up to a negligible error.
By applying smooth partitions of unity to the variable $|\tau_1|$ and $|\tau_2|$, we have
\bna
\mathfrak{R}(x_1,x_2,q)&=&
\sum_{N^{\varepsilon}\ll \Xi_1,\Xi_2\ll N^{\varepsilon}Q/q \atop \Xi_1,\Xi_2 \,\mathrm{dyadic}}
\frac{1}{4}
\int_{\mathbb{R}}\int_{\mathbb{R}}\omega\left(\frac{|\tau_1|}{\Xi_1}\right)
\omega\left(\frac{|\tau_2|}{\Xi_2}\right)
\rho_f^{\pm}\left(-\frac{1}{2}+i\tau_1\right)\rho_g^{\pm}\left(-\frac{1}{2}+i\tau_2\right)
\nonumber\\&&\times
(Nx_1)^{-i\tau_1}(Nx_2)^{-i\tau_2}
\int_{\mathbb{R}}
  g(q,\zeta)U\left(\frac{\zeta}{\mathfrak{q}^\varepsilon}\right)
V^{\dag}\left(-\frac{\zeta Q}{q},\frac{1}{2}-i\tau_1\right)\nonumber\\&&\qquad\times
W^{\dag}\left(\frac{\zeta Q}{q},\frac{1}{2}-i\tau_2\right)  \mathrm{d}\zeta
\mathrm{d}\tau_1\mathrm{d}\tau_2+O(N^{\varepsilon}),
\ena
where $\omega\in C_c^{\infty}(1,2)$ and $\omega^{(j)}(x)\ll_j 1$.
By a stationary phase analysis (see for example, \cite[Lemma 3.1]{KPY}), we have
\bna
V^{\dag}\left(-\frac{\zeta Q}{q},\frac{1}{2}-i\tau_1\right)=\frac{1}{\sqrt{\tau_1}}
V_{\sharp}\left(\frac{q\tau_1}{2\pi \zeta Q}\right)
\left(\frac{q\tau_1}{2\pi e\zeta Q}\right)^{-i\tau_1}+O(N^{-100})
\ena
and
\bna
W^{\dag}\left(\frac{\zeta Q}{q},\frac{1}{2}-i\tau_2\right) =\frac{1}{\sqrt{\tau_2}}
W^{\sharp}\left(\frac{-q\tau_2}{2\pi \zeta Q}\right)
\left(\frac{-q\tau_2}{2\pi e\zeta Q}\right)^{-i\tau_2}+O(N^{-100}),
\ena
where $V_{\sharp}(x)$ and $W^{\sharp}(x)$ are 1-inert functions supported on $x\asymp 1$.
Thus
\bna
\mathfrak{R}(x_1,x_2,q)&=&
\sum_{N^{\varepsilon}\ll \Xi_1,\Xi_2\ll N^{\varepsilon}Q/q \atop \Xi_1,\Xi_2 \,\mathrm{dyadic}}
\frac{1}{4}
\int_{\mathbb{R}}\int_{\mathbb{R}}
\frac{1}{\sqrt{\tau_1\tau_2}}\omega\left(\frac{|\tau_1|}{\Xi_1}\right)
\omega\left(\frac{|\tau_2|}{\Xi_2}\right)
\rho_f^{\pm}\left(-\frac{1}{2}+i\tau_1\right)\rho_g^{\pm}\left(-\frac{1}{2}+i\tau_2\right)
\nonumber\\&&\times\mathcal{G}(\tau_1,\tau_2)
\left(\frac{q\tau_1}{2\pi e Q}\right)^{-i\tau_1}
\left(\frac{-q\tau_2}{2\pi e Q}\right)^{-i\tau_2}
(Nx_1)^{-i\tau_1}(Nx_2)^{-i\tau_2}
\mathrm{d}\tau_1\mathrm{d}\tau_2+O(N^{\varepsilon})
\ena
with
\bna
\mathcal{G}(\tau_1,\tau_2)=\int_{\mathbb{R}}
  g(q,\zeta)U\left(\frac{\zeta}{\mathfrak{q}^\varepsilon}\right)
V_{\sharp}\left(\frac{q\tau_1}{2\pi \zeta Q}\right)
W^{\sharp}\left(\frac{-q\tau_2}{2\pi \zeta Q}\right)
\zeta^{i(\tau_1+\tau_2)}
\mathrm{d}\zeta.
\ena
For the integral $\mathcal{G}(\tau_1,\tau_2)$, repeated integration by parts shows that the contribution from
$|\tau_1+\tau_2|\geq N^{\varepsilon}$ is arbitrarily small. Therefore,
\bea\label{integral estimate}
\mathfrak{R}(x_1,x_2,q)\ll
\sum_{N^{\varepsilon}\ll \Xi_1,\Xi_2\ll N^{\varepsilon}Q/q \atop \Xi_1,\Xi_2 \,\mathrm{dyadic}}
\int\limits_{|\tau_1|\asymp \Xi_1}\int\limits_{|\tau_2|\asymp \Xi_2\atop |\tau_1+\tau_2|\leq N^{\varepsilon}}
\frac{1}{\sqrt{\tau_1\tau_2}}
\mathrm{d}\tau_1\mathrm{d}\tau_2+N^{\varepsilon}\ll N^{\varepsilon}.
\eea

\subsection{Cauchy-Schwartz and Poisson summation}
Applying the Cauchy-Schwartz inequality to the $m$-sum in \eqref{S1} and
using the Rankin-Selberg estimate in \eqref{RS}, we have
\bea\label{to T}
S^\flat(M,N_1,q,d,\pm,\pm)\ll \mathbf{T}^{1/2},
\eea
where, temporarily,
\bea\label{T}
\mathbf{T}
=\sum_{m}\varpi\left(\frac{m}{M}\right)
  \bigg|\sum_{n\sim N_1 \atop n\equiv-mp^{\lambda}\overline{p^{2\kappa-\lambda}}\bmod d}
  \frac{\lambda_g(n)}{n^{1/2}}
 \mathfrak{C}^*(\pm m,\mp n,q)\mathfrak{R}\left(\frac{\pi^2m}{q^2p^{2\kappa}},
\frac{\pi^2n}{q^2p^{2\lambda}},q\right)\bigg|^2.
\eea
Here $\varpi(x)$ is a smooth nonnagative function, $\varpi(x)=1$
if $x\in[1,2]$ and $\varpi^{(j)}(x)\ll_j 1$.

Opening the square in \eqref{T} and switching the order of summations ,we get
\bea\label{T-estimate}
\mathbf{T}
=\sum_{n_1\sim N_1 }
  \frac{\lambda_g(n_1)}{n_1^{1/2}}
  \sum_{n_2\sim N_1 \atop n_2\equiv n_1\bmod d}
  \frac{\overline{\lambda_g(n_2)}}{n_2^{1/2}}
\times\mathscr{H}
\eea
where
\bna
\mathscr{H}&=&\sum_{m\in \mathbb{Z}\atop m\equiv -n_1p^{2\kappa-\lambda}\overline{p^{\lambda}}\bmod d}
\varpi\left(\frac{m}{M}\right) \mathfrak{C}^*(\pm m,\mp n_1,q)
\overline{\mathfrak{C}^*(\pm m,\mp n_2,q)}\\
&&\times
\mathfrak{R}\left(\frac{\pi^2m}{q^2p^{2\kappa}},
\frac{\pi^2n_1}{q^2p^{2\lambda}},q\right)
\overline{\mathfrak{R}\left(\frac{\pi^2m}{q^2p^{2\kappa}},
\frac{\pi^2n_2}{q^2p^{2\lambda}},q\right)}.
\ena
Applying Poisson summation with modulus $dp^{\kappa}$,
we arrive at
\bna
\mathscr{H}=
\frac{M}{dp^{\kappa}}
\sum_{m\in \mathbb{Z}}\mathcal{C}\times \mathcal{K},
\ena
where
\bea\label{character sum}
\mathcal{C}=\sum_{\beta\bmod p^{\kappa}}
\mathfrak{C}^*(\pm \beta,\mp n_1,q)
\overline{\mathfrak{C}^*(\pm \beta,\mp n_2,q)}
e\left(\frac{m(-n_1p^{3\kappa-\lambda}\overline{p^{\kappa+\lambda}}+d\overline{d}\beta)}
{dp^{\kappa}}\right)
\eea
and
\bea\label{integral 5}
\mathcal{K}=\int_{\mathbb{R}}W(x)
\mathfrak{R}\left(\frac{\pi^2Mx}{q^2p^{2\kappa}},
\frac{\pi^2n_1}{q^2p^{2\lambda}},q\right)
\overline{\mathfrak{R}\left(\frac{\pi^2Mx}{q^2p^{2\kappa}},
\frac{\pi^2n_2}{q^2p^{2\lambda}},q\right)}
e\left(-\frac{m M x}{dp^{\kappa}}\right)\mathrm{d}x.
\eea
By integration by parts and using \eqref{integral estimate}, we have
\bna
\mathcal{K}&\ll&N^{\varepsilon}
\left(\frac{|m|M}{dp^{\kappa}}\right)^{-j}.
\ena
Thus the contribution from $|m|\geq N^{\varepsilon}dp^{\kappa}/M$ for any $\varepsilon>0$
is negligible. For smaller $m$, we use the trivial bound
\bea\label{K-estimate}
\mathcal{K}\ll N^{\varepsilon}.
\eea
For the character sum, we have the following estimate which will be proved in the last section. 

\begin{lemma}\label{character sum estimate}
Assume $(2\kappa+1)/3\leq \lambda\leq 3\kappa/4$. The character sum vanishes unless $(n_1n_2,p)=1$ and
$p^{\kappa-\lambda}|m$.
Let $m=m'p^{\kappa-\lambda}$, $\lambda=2\alpha+\delta_1$ and
$\kappa=2\beta+\delta_2$ with $\delta_1,\delta_2=0$ or 1, $\alpha\geq 1$ and $\beta\geq 1$.

(1) If $p^{\beta-\kappa+\lambda}|m'$, , then $n_1\equiv n_2 (\bmod \,p^{\beta-\kappa+\lambda})$ and
\bna
\mathcal{C}^*(n_1,n_2,m) \ll p^{2\kappa+\lambda+\delta_1}.
\ena

(2) If $p^{\ell}\| m'$ with $\ell<\beta-\kappa+\lambda$, then $p^{\ell}\| n_1-n_2$ and
\bna
\mathcal{C}^*(n_1,n_2,m) \ll p^{5\kappa/2+2\ell +\delta_1+\delta_2/2}.
\ena

(3) For $m=0$, we have
\bna
\mathcal{C}^*(n_1,n_2,m)=p^{2\kappa}\sum_{d|(n_1-n_2,p^{\lambda})}d\mu\left(p^{\lambda}/d\right).
\ena

\end{lemma}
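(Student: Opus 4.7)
The plan is to substitute \eqref{character sum3} into both factors of $\mathfrak{C}^*$ appearing in \eqref{character sum}, using independent variables $(b_1,c_1)$ and $(b_2,c_2)$, and then to execute the $\beta$-sum first. Since $\beta$ runs over all of $\mathbb{Z}/p^{\kappa}\mathbb{Z}$, this produces a factor $p^\kappa$ together with the congruence
\begin{equation*}
\pm\overline{q(b_1 p^{\kappa-\lambda}+c_1 q)}\mp\overline{q(b_2 p^{\kappa-\lambda}+c_2 q)}+m\overline{d}\equiv 0\pmod{p^\kappa}.
\end{equation*}
For $m=0$ this forces the units $b_i p^{\kappa-\lambda}+c_i q$ to agree modulo $p^\kappa$: reducing mod $p^{\kappa-\lambda}$ pins $c_1\equiv c_2$ there, and reducing mod $p^\lambda$ determines $c_2-c_1$ from $b_1-b_2$. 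The character ratio $\overline{\chi}(c_1)\chi(c_2)$ then collapses along the diagonal, and the remaining $b_i$-sums fold up into the Ramanujan-type expression in $n_1-n_2$ modulo $p^\lambda$ claimed in part $(3)$.

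For $m\neq 0$, I would change variables to $\gamma_i=b_ip^{\kappa-\lambda}+c_iq\in(\mathbb{Z}/p^\kappa\mathbb{Z})^*$, turning the congruence into $\overline{\gamma_1}-\overline{\gamma_2}\equiv -mq\overline{d}\pmod{p^\kappa}$, so that $v_p(\gamma_2-\gamma_1)=v_p(m)$. Writing $\gamma_2=\gamma_1(1+\xi)$ and invoking the standard expansion for a primitive character of prime-power conductor---namely $\chi(1+\xi)=e\bigl(a_\chi L(\xi)/p^\kappa\bigr)$ with $a_\chi\in(\mathbb{Z}/p^\kappa\mathbb{Z})^*$ and $L(\xi)=\xi-\xi^2/2+\cdots$ truncated to sufficient $p$-adic precision---reduces the $c$-sum to an exponential sum with polynomial phase. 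The vanishing of $\mathcal{C}^*$ unless $p^{\kappa-\lambda}\mid m$ comes from matching this phase against the $b_i$-variables: without such divisibility, a nontrivial additive character mod $p^\lambda$ survives in one of the $b_i$ and annihilates the sum. Writing $m=m'p^{\kappa-\lambda}$ and $\ell=v_p(m')$, the threshold $\ell=\beta-\kappa+\lambda$ is precisely where the quadratic term of $L(\xi)$ first contributes to the modulus $p^\kappa$, separating the linear regime (case $(1)$) from the quadratic regime (case $(2)$).

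The main technical obstacle will be the $p$-adic stationary-phase analysis in case $(2)$. There I would identify the critical point in the $c$-variable, verify nondegeneracy of the second derivative modulo $p$---the mechanism producing the parity factor $p^{\delta_2/2}$ from $\kappa=2\beta+\delta_2$---and show that the critical-point equation simultaneously forces $v_p(n_1-n_2)=\ell$, as claimed. In case $(1)$ the phase is essentially linear in $c$ and one only obtains the congruence $n_1\equiv n_2\pmod{p^{\beta-\kappa+\lambda}}$ with no square-root saving, yielding the cruder bound $p^{2\kappa+\lambda+\delta_1}$. Combining these savings with the trivial contribution from the $b_1,b_2$-sums over $(\mathbb{Z}/p^\lambda\mathbb{Z})^*$ and the initial factor $p^\kappa$ from the $\beta$-sum produces the two stated bounds.
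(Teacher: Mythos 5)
Your opening move matches the paper exactly: substituting \eqref{character sum3} twice into \eqref{character sum}, executing the complete sum over the Poisson dual variable $\beta$ modulo $p^\kappa$ to extract a factor $p^\kappa$ together with the linking congruence, and then re-coordinatizing so that a single $c$-sum (together with $b_1,b_2$) survives --- this is precisely how the paper arrives at the displayed form of $\mathcal{C}^*(n_1,n_2,m)$, and your $\gamma_i$-variables are its $\overline{c}-b_ip^{\kappa-\lambda}$ in disguise. From this point on, however, your plan and the paper's proof diverge, and the remaining steps as described contain a genuine error.

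You assert that $p^{\kappa-\lambda}\mid m$ is forced because ``without such divisibility, a nontrivial additive character mod $p^\lambda$ survives in one of the $b_i$ and annihilates the sum.'' This is not the mechanism, and it would not work. After expanding, each $b_i$-sum carries \emph{both} a linear phase in $b_i$ (from $\chi$ applied to $\gamma_i-b_ip^{\kappa-\lambda}$) \emph{and} an inverse phase in $\overline{b_i}$ (from $e(n_i\overline{q}\overline{b_i}/p^\lambda)$); this is a Kloosterman-type sum modulo $p^\lambda$, which does \emph{not} vanish for generic parameters, so nothing about $m$ is forced by the $b_i$-sums alone. In the paper, the divisibility $p^{\kappa-\lambda}\mid m$ is extracted only after the two quadratic congruences in $a,h$ (obtained from splitting each $b_i$ into low and high parts and summing the high part against $\chi(1+zp^{\kappa-\alpha})=e(\eta z/p^\alpha)$) are combined with a further split $c=c_1p^{\beta+\delta_2}+c_2$: it is the orthogonality of the $c_1$-sum modulo $p^\beta$ that produces the relation $mq\overline d+ap^{\kappa-\lambda}c_2^2+hp^{\kappa-\lambda}(c_2+mq\overline d)^2\equiv0\ (\bmod\ p^\beta)$, from which $p^{\kappa-\lambda}\mid m$ follows. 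This multi-step Weyl-differencing structure is the crux of the lemma, not a side remark.

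Beyond this, cases (1) and (2) are where the lemma actually lives, and your proposal explicitly defers them (``the main technical obstacle will be the $p$-adic stationary-phase analysis in case (2)''). The paper does not perform stationary phase: it solves the resulting system of quadratic congruences in $a,h,c_2$ modulo $p^{\beta-\kappa+\lambda}$ step by step, deducing first $\overline{n_2}h^2\equiv\overline{n_1}a^2\ (\bmod\ p^{\kappa-\lambda})$, then $p^\ell\|n_1-n_2$, then that $c_3$ is determined up to $O(1)$ roots of a cubic, which in turn fixes $h_3$ and $a_3$; the factor $p^{\delta_2/2}$ is just bookkeeping from $\beta=(\kappa-\delta_2)/2$, not a Hessian-nondegeneracy phenomenon. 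Your ``$p$-adic logarithm $L(\xi)=\xi-\xi^2/2+\cdots$ plus critical-point analysis'' framework is a legitimate alternative toolkit, and your intuition that $\ell=\beta-\kappa+\lambda$ marks the onset of the quadratic regime is consistent with the paper's case split; but none of the quantitative claims in (1) or (2) follow from what you have written, and the claimed vanishing mechanism is incorrect. As it stands this is a programme, not a proof.
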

By \eqref{K-estimate} and Lemma \ref{character sum estimate}, we get
\bea\label{H-estimate}
\mathscr{H}&&\ll
\frac{N^{\varepsilon}M}{dp^{\kappa}}\bigg(p^{2\kappa+\lambda}
\mathbf{1}_{n_1\equiv n_2\bmod p^{\lambda}}
+p^{2\kappa+\lambda-1}\mathbf{1}_{n_1\equiv n_2\bmod p^{\lambda-1}}\nonumber\\&&
+\sum_{0\neq |m'|\leq N^{\varepsilon}dp^{\lambda}Q/(qM)\atop p^{\beta-\kappa+\lambda}|m',n_1-n_2}
p^{2\kappa+\lambda+\delta_1}
+\sum_{1\leq \ell\leq \lambda-\kappa/2\atop p^{\ell}\|n_1-n_2}
\sum_{0\neq |m'|\leq N^{\varepsilon}dp^{\lambda}Q/(qM)\atop p^{\ell}\|m'}
p^{5\kappa/2+2\ell +\delta_1+\delta_2/2}
\bigg),
\eea
where $(2\kappa+1)/3\leq \lambda\leq 3\kappa/4$, $\lambda=2\alpha+\delta_1$ and
$\kappa=2\beta+\delta_2$ with $\delta_1,\delta_2=0$ or 1, $\alpha\geq 1$ and $\beta\geq 1$.
By plugging \eqref{H-estimate} into \eqref{T-estimate} and using the estimate
$\lambda_g(n_1)\overline{\lambda_g(n_2)}\ll |\lambda_g(n_1)|^2+|\lambda_g(n_2)|^2$, we have
\bna
\mathbf{T}
&\ll&
\frac{N^{\varepsilon}M}{dp^{\kappa}}
\sum_{n_1\sim N_1 }
  \frac{|\lambda_g(n_1)|^2}{n_1^{1/2}}\bigg(
  \sum_{n_2\sim N_1 \atop n_2\equiv n_1\bmod dp^{\lambda}}
  \frac{1}{n_2^{1/2}}p^{2\kappa+\lambda}
  +  \sum_{n_2\sim N_1 \atop n_2\equiv n_1\bmod dp^{\lambda-1}}
  \frac{1}{n_2^{1/2}}p^{2\kappa+\lambda-1}
  \bigg)\\
&&+\frac{N^{\varepsilon}M}{dp^{\kappa}}\sum_{\ell\ll 1}
\sum_{n_1\sim N_1 }
  \frac{|\lambda_g(n_1)|^2}{n_1^{1/2}}
  \sum_{n_2\sim N_1 \atop n_2\equiv n_1\bmod dp^{\beta-\kappa+\lambda}}
  \frac{1}{n_2^{1/2}}\frac{dQ}{qM}p^{5\kappa/2+\lambda+\ell +\delta_1+\delta_2/2}\\
&&+\frac{N^{\varepsilon}M}{dp^{\kappa}}\sum_{1\leq \ell\leq \lambda-\kappa/2}
\sum_{n_1\sim N_1 }
  \frac{|\lambda_g(n_1)|^2}{n_1^{1/2}}
  \sum_{n_2\sim N_1 \atop n_2\equiv n_1\bmod dp^{\ell}}
  \frac{1}{n_2^{1/2}}\frac{dQ}{qM}p^{5\kappa/2+\lambda+\ell +\delta_1+\delta_2/2}\bigg).
\ena
Notice that the last term dominates the second term. Therefore,
\bna
\mathbf{T}
&\ll&
\frac{N^{\varepsilon}M}{d}p^{\kappa+\lambda}
\bigg(1+\frac{N_1}{dp^{\lambda}}\bigg)+\frac{p^{3/2}N^{\varepsilon}M}{dp^{\kappa}}
\left(\frac{dQ}{qM}p^{\lambda-\kappa/2}+\frac{QN_1}{qM}\right)p^{3(\kappa+1)/2+\lambda}\\
&\ll&\frac{N^{\varepsilon}M}{d}\left(p^{\kappa+\lambda}
+\frac{Qp^{\lambda}}{qM}p^{3(\kappa+1)/2+\lambda}\right)\\
&\ll&\frac{N^{\varepsilon}}{d}\left(Mp^{\kappa+\lambda}
+\frac{Q}{q}p^{3(\kappa+1)/2+2\lambda}\right)\\
&\ll&\frac{N^{\varepsilon}}{d}\left(p^{3\kappa}
+\frac{Q}{q}p^{3(\kappa+1)/2+2\lambda}\right),
\ena
where recalling \eqref{dyadic-0} that $M\ll p^{2\kappa-\lambda+\varepsilon}$ and
$N_1\ll p^{\lambda+\varepsilon}$. Here we have used \eqref{RS}. Plugging this estimate into
\eqref{to T}, we have
\bea\label{to T-2}
S^\flat(M,N_1,q,d,\pm,\pm)\ll \frac{N^{\varepsilon}}{d^{1/2}}
\left(p^{3\kappa/2}+\frac{Q^{1/2}}{q^{1/2}}p^{3(\kappa+1)/4+\lambda}\right).
\eea

\subsection{Conclusion}

By \eqref{dyadic-0} and \eqref{to T-2}, one has
\bna
S^\flat(N)&\ll& \frac{N^{1/2+\varepsilon}}{p^{(\kappa+\lambda)/2}}
\sum_{q\leq Q\atop (q,p)=1}q^{-1}
\sum_{d|q}d^{1/2}\left(p^{3\kappa/2}+\frac{Q^{1/2}}{q^{1/2}}p^{3(\kappa+1)/4+\lambda}\right)\\
&\ll& N^{3/4+\varepsilon}\big(p^{\kappa-3\lambda/4}+p^{(\kappa+\lambda+3)/4}\big).
\ena
This completes the proof of Proposition 1.

\section{Proof of Lemma \ref{A-voronoi}}
\setcounter{equation}{0}
\medskip

In this section we apply $\rm GL_2$ Voronoi formula to transform $\mathscr{A}$ in \eqref{A}.
By the Fourier expansion of $\chi$ in the terms of additive characters,
we have
\bna
\mathscr{A}
=\frac{1}{\tau(\overline{\chi})}\sum_{c(\text{{\rm mod }} p^\kappa)}
\overline{\chi}(c)\sum_m\lambda_f(m)
e\left(\frac{ap^{\kappa-\lambda}+cq}{qp^\kappa}m\right)V\left(\frac{m}{N}\right)
e\left(\frac{m\zeta}{Qqp^\lambda}\right).
\ena
Note that $(ap^{\kappa-\lambda}+cq,q)=1$ and $(ap^{\kappa-\lambda}+cq,p)=(cq,p)=1$.
Thus $(ap^{\kappa-\lambda}+cq, qp^{\kappa})=1$. Denote $a^*=ap^{\kappa-\lambda}+cq$.
The $m$-sum is
\bna
\sum_m\lambda_f(m)e\left(\frac{a^*m}{qp^\kappa}\right)
V\left(\frac{m}{N}\right)e\left(\frac{m\zeta}{Qqp^\lambda}\right)
:=\sum_m\lambda_f(m)e\left(\frac{a^*m}{qp^\kappa}\right)\varphi\left(m\right),
\ena
where $\varphi(y)=V(y/N)e\left(\frac{\zeta y}{Qqp^\lambda}\right)$.

Let $f$ be a Hecke-Maass cusp form with Laplace eigenvalue $1/4+\mu_f^2$.
Without loss of generality, we assume $f$ is even. We have the following
Voronoi formula for $f$ (see \cite{MS}).

\begin{lemma}\label{GL2 Voronoi formula}
Let $\varphi(x)$ be a smooth function compactly supported on $\mathbb{R^+}$.
Let $a, \overline{a}, c\in\mathbb{Z}$ with $c\neq0, (a,c)=1$ and
$a\overline{a}\equiv1\;(\text{{\rm mod }} c)$. Then
\bea
\sum_m\lambda_f(m)e\left(\frac{am}{c}\right)\varphi(m)
=c\sum_\pm\sum_m\frac{\lambda_f(m)}{m}e\left(\pm\frac{\overline{a}m}{c}\right)
\Psi_\pm\left(\frac{m}{c^2}\right),
\eea
where for $\sigma>-1$,
\bea\label{GL2 integral-11}
\Psi_\pm(x)&=&\frac{1}{2\pi i}\int_{(\sigma)}(\pi^2 x)^{-s}\rho_f^{\pm}(s)\widetilde{\varphi}(-s) \mathrm{d}s,
\eea
with
\bna
\rho_f^{\pm}(s)=\frac{1}{2\pi}\left(\prod_\pm\frac{\Gamma(\frac{1+s\pm i\mu_f}{2})}
{\Gamma(\frac{-s\pm i\mu_f}{2})}\pm
\prod_\pm\frac{\Gamma(\frac{2+s\pm i\mu_f}{2})}
{\Gamma(\frac{1-s\pm i\mu_f}{2})}\right).
\ena
Here $\widetilde{\varphi}$ is the Mellin transform of $\varphi$.
\end{lemma}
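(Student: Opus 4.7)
The plan is to follow the standard Mellin--Barnes contour-shifting derivation of the $\mathrm{GL}_2$ Voronoi formula, as in Kowalski--Michel--VanderKam or Miller--Schmid \cite{MS}. The starting point is Mellin inversion
\[
\varphi(m) = \frac{1}{2\pi i}\int_{(\sigma_0)} \widetilde{\varphi}(s)\, m^{-s}\,\mathrm{d} s
\]
with $\sigma_0 > 1$, so that after interchanging sum and integral (justified by $|\lambda_f(m)|\ll m^{\varepsilon}$ and the rapid decay of $\widetilde{\varphi}$ in vertical strips) the left-hand side of the Voronoi identity becomes
\[
\frac{1}{2\pi i}\int_{(\sigma_0)} \widetilde{\varphi}(s)\, L(s, f, a/c)\,\mathrm{d} s,
\]
where $L(s, f, a/c) := \sum_m \lambda_f(m) e(am/c) m^{-s}$ is the additively twisted $L$-function attached to $f$.

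The heart of the proof is to establish a functional equation for $L(s, f, a/c)$ between $s$ and $1-s$. I would proceed by Gauss-sum decomposition: writing $e(am/c)$ as a linear combination of Dirichlet characters $\chi\pmod{c}$ (handling imprimitive contributions in the usual way), one expresses $L(s, f, a/c)$ as a finite sum of multiplicatively twisted $L$-functions $L(s, f\otimes\chi)$. Each of these is entire by cuspidality of $f$ and satisfies the classical functional equation for a level-$1$ Maass form twisted by $\chi$, with local gamma factor $\Gamma\!\left(\tfrac{s + i\mu_f + \delta}{2}\right)\Gamma\!\left(\tfrac{s - i\mu_f + \delta}{2}\right)$, where $\delta\in\{0,1\}$ records the parity of the character (recall $f$ is taken to be even). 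After applying each functional equation, the inverse Gauss-sum identity reassembles the result as $L(1-s, f, \pm\overline{a}/c)$, the $\pm$ tracking the even and odd parities of $\chi$. Using the duplication and reflection formulas for $\Gamma$, the accumulated gamma ratios collapse to the kernel $\rho_f^\pm(s)$ in the statement, the two terms of $\rho_f^\pm$ corresponding to the $\delta = 0$ and $\delta = 1$ contributions.

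Next I would shift the contour from $(\sigma_0)$ to $(-\sigma_1)$ for some $\sigma_1 > 0$, crossing no poles. On the new line $\Re(1-s) > 1$, so the dual series
\[
L(1-s, f, \pm\overline{a}/c) = \sum_m \lambda_f(m)\, e\!\left(\pm\tfrac{\overline{a}m}{c}\right) m^{s-1}
\]
converges absolutely. Interchanging sum and integral once more and making the change of variable $s\mapsto -s$, the inner integral becomes $\frac{1}{2\pi i}\int_{(\sigma)} (\pi^2 m/c^2)^{-s}\rho_f^\pm(s)\widetilde{\varphi}(-s)\,\mathrm{d} s = \Psi_\pm(m/c^2)$, while the conductor power $c^{2s-1}$ produced by the functional equation accounts for the overall prefactor $c$ and the normalization $m/c^2$ in the argument of $\Psi_\pm$.

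The main technical difficulty is the careful bookkeeping of signs, Gauss-sum weights, and root numbers so that the two terms in $\rho_f^\pm(s)$ emerge with precisely the $+$ and $-$ combination recorded in the statement, and so that the even/odd decomposition of $\chi$ correctly matches the $\pm$ indexing the two dual sums on the right-hand side. The analytic ingredients (absolute convergence for both interchanges, the absence of poles in the contour shift) are routine, following from Stirling's asymptotics and the super-polynomial decay of $\widetilde{\varphi}(\sigma + it)$ in $|t|$, which is a consequence of $\varphi\in C_c^{\infty}(\mathbb{R}^+)$.
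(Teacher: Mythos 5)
The paper does not actually prove this lemma; it is quoted from Miller--Schmid \cite{MS}, whose argument proceeds through the theory of automorphic distributions (automorphy of $f$ applied directly to a distributional boundary value), not through any Dirichlet-character decomposition. Your sketch therefore takes a genuinely different route, and that route has a real gap at its crucial step. The Gauss-sum identity
\[
\sum_{\chi \bmod c} \tau(\overline{\chi})\,\chi(m) \;=\; \phi(c)\,e(m/c)
\]
holds only when $(m,c)=1$; when $(m,c)>1$ every $\chi(m)$ vanishes, so the left side is $0$ while $e(am/c)$ is not. Consequently the additively twisted series $L(s,f,a/c)=\sum_m \lambda_f(m)e(am/c)m^{-s}$ is \emph{not} a finite linear combination of the multiplicatively twisted $L$-functions $L(s,f\otimes\chi)$, and ``handling imprimitive contributions in the usual way'' conceals the bulk of the work rather than disposing of it. To make the character route rigorous one must split according to $d=(m,c)$, write $m=dm'$ with $(m',c/d)=1$, and try to descend to moduli $c/d$; but $\lambda_f(dm')$ is not simply $\lambda_f(d)\lambda_f(m')$ when $(d,m')>1$, so the Hecke relations force a nontrivial recursion that is by no means routine.

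The clean and standard proof for level one avoids characters entirely: one uses the $\mathrm{SL}_2(\mathbb{Z})$-automorphy of $f$, applies a matrix $\begin{pmatrix} * & * \\ c & * \end{pmatrix}\in\mathrm{SL}_2(\mathbb{Z})$ to the Fourier--Whittaker expansion evaluated near the cusp $a/c$, and Mellin-transforms in the imaginary part of $z$. This produces the functional equation for $L(s,f,a/c)$ directly, with exactly the gamma kernel $\rho_f^{\pm}(s)$ of the statement (the even and odd pieces arising from the $K$-Bessel versus $e(\cdot)$ decomposition of the Whittaker function, not from a parity sort over $\chi$). Once that functional equation is in hand, the Mellin-inversion, contour-shift, and re-expansion steps you describe are correct and do finish the proof; it is only the middle step, the derivation of the additive-twist functional equation, that needs to be replaced.
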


Applying the $GL_2$ Voronoi formula in Lemma \ref{GL2 Voronoi formula}, we have
\bea
\sum_m\lambda_f(m)e\left(\frac{a^*m}{qp^\kappa}\right)\varphi\left(m\right)
=qp^\kappa\sum_\pm\sum_m\frac{\lambda_f(m)}{m}e\left(\pm\frac{\overline{a^*}m}{qp^\kappa}\right)
\Psi_\pm\left(\frac{m}{q^2p^{2\kappa}}\right),
\eea
where $\Psi_\pm(x)$ is defined in \eqref{GL2 integral-11}.
By Stirling's formula, for $\sigma\geq -1/2$,
\bna
\rho_f^{\pm}(\sigma+i\tau)\ll(1+|\tau|)^{2(\sigma+1/2)}.
\ena
Moreover, for $s=\sigma+i\tau$,
\bna
\widetilde{\varphi}(-s)
&=&N^{-s}V^{\dag}\left(-\frac{\zeta N}{Qqp^\lambda}, -s\right)\\
&\ll&N^{-\sigma}\min\left\{1, \left(\frac{N}{Qqp^\lambda|\tau|}\right)^j\right\}
\ena
for any $j\neq 0$, where $V^{\dag}(r,s)=\int_0^\infty V(y)e(-ry)y^{s-1}\mathrm{d}y$.
Here we used the bound $V^{\dag}(r,\sigma+i\tau)\ll\min\left\{1, \left(\frac{1+|r|}{|\tau|}\right)^j\right\}$.
Thus,
\bna
\Psi_\pm(x)
\ll \left(\frac{N}{Qqp^\lambda}\right)^{2+\varepsilon}\left(\frac{xQ^2q^2p^{2\lambda}}{N}\right)^{-\sigma}.
\ena
So the $m$-sum is supported on $m\leq\frac{q^2p^{2\kappa+\varepsilon}N}{Q^2q^2p^{2\lambda}}
\asymp p^{2\kappa-\lambda+\varepsilon}$ at the cost a negligible error. For small values of $m$,
we move the integration line to $\sigma=-1/2$ to get
\bna
\Psi_\pm\left(\frac{m}{q^2p^{2\kappa}}\right)
=\frac{ N^{1/2}m^{1/2}}{qp^\kappa}\mathfrak{I}^\pm\left(\frac{\pi^2m}{q^2p^{2\kappa}},q,\zeta\right)
\ena
where using \eqref{Q},
\bea\label{GL2 integral-1}
\mathfrak{I}^\pm(x,q,\zeta)=\frac{1}{2}\int_{\mathbb{R}}(Nx)^{-i\tau}
\rho_f^{\pm}\left(-\frac{1}{2}+i\tau\right)
V^{\dag}\left(-\frac{\zeta Q}{q},\frac{1}{2}-i\tau\right)\mathrm{d}\tau.
\eea
So we obtain
\bea\label{A-sum}
\mathscr{A}=\frac{ N^{1/2}}{\tau(\overline{\chi})}\sum_\pm
\sum_{c(\text{{\rm mod }} p^\kappa)}
\overline{\chi}(c)
\sum_{m\leq p^{2\kappa-\lambda+\varepsilon}}\frac{\lambda_f(m)}{m^{1/2}}
e\left(\pm\frac{m\overline{a^*}}{qp^\kappa}\right)
\mathfrak{I}^\pm\left(\frac{\pi^2m}{q^2p^{2\kappa}},q,\zeta\right)+O_A(\mathfrak{q}^{-A}).
\eea
Lemma \ref{A-voronoi} follows from \eqref{GL2 integral-1} and \eqref{A-sum}.

\section{Proof of Lemma \ref{B-voronoi}}
\setcounter{equation}{0}
\medskip
In this section we will apply the $\rm GL_2$ Voronoi formula to transform $\mathscr{B}$ in 
\eqref{B},
where
\bna
\mathscr{B}=
\sum_{n}\lambda_g(n) e\left(-\frac{an}{qp^\lambda}\right)\phi(n),
\ena
where $\phi(y)=W(y/N)e\left(-\zeta y/Qqp^\lambda\right)$.
Applying the $GL_2$ Voronoi formula in Lemma \ref{GL2 Voronoi formula}, we have
\bea
\mathscr{B}
=qp^{\lambda}\sum_\pm\sum_n\frac{\lambda_g(n)}{n}e\left(\mp\frac{\overline{a}n}{qp^\lambda}\right)
\Phi_\pm\left(\frac{n}{q^2p^{2\lambda}}\right),
\eea
where
\bea\label{GL2 integral-2}
\Phi_\pm(x)&=&\frac{1}{2\pi i}\int_{(\sigma)}(\pi^2 x)^{-s}
\rho_g^{\pm}(s)\widetilde{\phi}(-s) \mathrm{d}s,
\eea
By Stirling's formula, for $\sigma\geq -1/2$,
\bna
\rho_g^{\pm}(\sigma+i\tau)\ll(1+|\tau|)^{2(\sigma+1/2)}.
\ena
Moreover, for $s=\sigma+i\tau$,
\bna
\widetilde{\phi}(-s)&=&N^{-s}W^{\dag}\left(\frac{\zeta N}{Qqp^\lambda}, -s\right)\\
&\ll&N^{-\sigma}\min\left\{1, \left(\frac{N}{Qqp^\lambda|\tau|}\right)^j\right\}
\ena
for any $j\neq 0$, where we recall that $W^{\dag}(r,s)=\int_0^\infty W(y)e(-ry)y^{s-1}\mathrm{d}y$.
Here we used the bound $W^{\dag}(r,\sigma+i\tau)\ll\min\left\{1, \left(\frac{1+|r|}{|\tau|}\right)^j\right\}$.
Thus,
\bna
\Phi_\pm(x)
\ll \left(\frac{N}{Qqp^\lambda}\right)^{2+\varepsilon}\left(\frac{xQ^2q^2p^{2\lambda}}{N}\right)^{-\sigma}.
\ena
Thus the contribution from $n\geq N^{1+\epsilon}/Q^2\asymp p^{\lambda+\varepsilon}$ is negligible. 
For small values of $m$,
we move the integration line to $\sigma=-1/2$ to get
\bna
\Phi_\pm\left(\frac{n}{q^2p^{2\lambda}}\right)
=\frac{ N^{1/2}n^{1/2}}{qp^{\lambda}}\mathfrak{J}^\pm\left(\frac{\pi^2n}{q^2p^{2\lambda}},q,\zeta\right)
\ena
where using \eqref{Q},
\bea\label{GL2 integral-3}
\mathfrak{J}^\pm(x,q,\zeta)=\frac{1}{2}\int_{\mathbb{R}}(Nx)^{-i\tau}
\rho_g^{\pm}\left(-\frac{1}{2}+i\tau\right)
W^{\dag}\left(\frac{\zeta Q}{q},\frac{1}{2}-i\tau\right)\mathrm{d}\tau.
\eea
We conclude that
\bea\label{B-sum}
\mathscr{B}=N^{1/2}\sum_\pm
\sum_{n\leq p^{\lambda+\epsilon}}\frac{\lambda_g(n)}{n^{1/2}}
e\left(\mp\frac{n\overline{a}}{qp^\lambda}\right)
\mathfrak{J}^\pm\left(\frac{\pi^2n}{q^2p^{2\lambda}},q,\zeta\right)+O_A(\mathfrak{q}^{-A}).
\eea
Lemma \ref{B-voronoi} follows from \eqref{GL2 integral-3} and \eqref{B-sum}.

\medskip

\medskip

\section{Character sums}
\setcounter{equation}{0}
\medskip
In this section, we estimate the character sum in \eqref{character sum}, which by 
\eqref{character sum3} is
\bna
\mathcal{C}=e\left(\frac{-m n_1p^{\kappa-\lambda}\overline{p^{\lambda}}}{d}\right)
\mathcal{C}^*(\pm n_1,\pm n_2,\pm m),
\ena
where for $n_1,n_2,m\in \mathbb{Z}$,
\bna
\mathcal{C}^*(n_1,n_2,m)=p^{\kappa}\sum_{c(\text{{\rm mod }} p^\kappa)}\;
  \sideset{}{^*}\sum_{b_1 (\text{{\rm mod }} p^{\lambda})}\;
    \sideset{}{^*}\sum_{b_2 (\text{{\rm mod }} p^{\lambda})}
\overline{\chi}(\overline{c}-b_1p^{\kappa-\lambda})
\chi(\overline{c+mqd^{-1}}-b_2p^{\kappa-\lambda})
  e\left(\frac{-n_1\overline{q}\overline{b_1}}{p^\lambda}\right)
  e\left(\frac{n_2\overline{q}\overline{b_2}}{p^\lambda}\right).
\ena
So to estimate $\mathcal{C}$, we only need to evaluate $\mathcal{C}^*(n_1,n_2,m)$.

(1) Let $\lambda=2\alpha+\delta_1$, $\delta_1=0$ or 1 and $\alpha\geq 1$.
Write $b_1=a_1p^{\alpha+\delta_1}+a_2$, $a_1(\text{{\rm mod }}p^{\alpha})$,
$a_2(\text{{\rm mod }}p^{\alpha+\delta_1})$, and
$b_2=h_1p^{\alpha+\delta_1}+h_2$, $h_1(\text{{\rm mod }}p^{\alpha})$,
$h_2(\text{{\rm mod }}p^{\alpha+\delta_1})$.
Then
\bna
\mathcal{C}^*(n_1,n_2,m)
&=&p^{\kappa+2\alpha}\sum_{c(\text{{\rm mod }} p^\kappa)}\;
    \sideset{}{^*}\sum_{a_2 (\text{{\rm mod }} p^{\alpha+\delta_1})}\;
\overline{\chi}(\overline{c}-a_2p^{\kappa-\lambda})
  e\left(\frac{-n_1\overline{q}\overline{a_2}}{p^{\lambda}}\right)\\&&\times
\sideset{}{^*}\sum_{h_2 (\text{{\rm mod }} p^{\alpha+\delta_1})}
\chi(\overline{c+mqd^{-1}}-h_2p^{\kappa-\lambda})
  e\left(\frac{n_2\overline{q}\overline{h_2}}{p^{\lambda}}\right)
\\&&\times\frac{1}{p^{\alpha}}\sum_{a_1 (\text{{\rm mod }} p^{\alpha})}\;
\chi(1+\overline{\overline{c}-a_2p^{\kappa-\lambda}}a_1p^{\kappa-\alpha})
  e\left(\frac{n_1\overline{q}\overline{a_2^2}a_1}{p^{\alpha}}\right)
\\
&&\times \frac{1}{p^{\alpha}}
\sum_{h_1 (\text{{\rm mod }} p^{\alpha})}\;
\chi(1-\overline{\overline{c+mqd^{-1}}-h_2p^{\kappa-\lambda}}h_1p^{\kappa-\alpha})
  e\left(\frac{-n_2\overline{q}\overline{h_2^2}h_1}{p^{\alpha}}\right).
\ena
Recall $\chi$ is a primitive character of modulus $p^{\kappa}$ and $\kappa>\lambda\geq 2\alpha$.
Thus $\chi(1+zp^{\kappa-\alpha})$
is an additive character to modulus $p^{\alpha}$, so there exists an integer
$\eta$ (uniquely determined modulo $p^{\alpha}$), $(\eta,p)=1$, such that
$\chi(1+zp^{\kappa-\alpha})=\exp(2\pi i \eta z/p^{\alpha})$.
Thus
\bna
\mathcal{C}^*(n_1,n_2,m)
&=&p^{\kappa+2\alpha}\sum_{c(\text{{\rm mod }} p^\kappa)}\;
    \sideset{}{^*}\sum_{a (\text{{\rm mod }} p^{\alpha+\delta_1})\atop
\eta q \overline{n_1}a^2-p^{\kappa-\lambda}a+\overline{c}\equiv 0(\bmod p^{\alpha})}\;
\overline{\chi}(\overline{c}-ap^{\kappa-\lambda})
  e\left(\frac{-n_1\overline{q}\overline{a}}{p^{\lambda}}\right)\\&&\times
\sideset{}{^*}\sum_{h (\text{{\rm mod }} p^{\alpha+\delta_1})\atop
\eta q \overline{n_2}h^2-p^{\kappa-\lambda}h+\overline{c+mqd^{-1}}
\equiv 0(\bmod p^{\alpha})}
\chi(\overline{c+mqd^{-1}}-hp^{\kappa-\lambda})
  e\left(\frac{n_2\overline{q}\overline{h}}{p^{\lambda}}\right)
\ena
and $\mathcal{C}^*(n_1,n_2,m)$ vanishes unless $(n_1n_2,p)=1$.
Next, we consider the sum over $c$. Let $\kappa=\beta+\delta_2$,
$\delta_2=0$ or 1 and $\beta\geq \alpha\geq 1$.
Write $c=c_1p^{\beta+\delta_2}+c_2$, $c_1(\text{{\rm mod }}p^{\beta})$,
$c_2(\text{{\rm mod }}p^{\beta+\delta_2})$.
Then
\bna
\overline{\chi}(\overline{c}-ap^{\kappa-\lambda})
&=&\overline{\chi}(\overline{c_2}-ap^{\kappa-\lambda})
\chi(1+\overline{(\overline{c_2}-ap^{\kappa-\lambda})c_2^2}
c_1p^{\beta+\delta_2}),\\
\chi(\overline{c+mqd^{-1}}-hp^{\kappa-\lambda})&=&
\chi(\overline{c_2+mqd^{-1}}-hp^{\kappa-\lambda})
\chi(1-\overline{(\overline{c_2+ mqd^{-1}}-hp^{\kappa-\lambda})
(c_2\pm mqd^{-1})^2}
c_1p^{\beta+\delta_2}).
\ena
Recall $\chi$ is a primitive character of modulus $p^{\kappa}$ and $\kappa=2\beta+\delta_2$.
Thus $\chi(1+zp^{\beta+\delta_2})=\chi(1+zp^{\kappa-\beta})$
is an additive character to modulus $p^{\beta}$, so there exists an integer
$\eta'$ (uniquely determined modulo $p^{\beta}$), $(\eta',p)=1$, such that
$\chi(1+zp^{\beta+\delta_2})=\exp(2\pi i \eta' z/p^{\beta})$.
Therefore,
\bna
\mathcal{C}^*(n_1,n_2,m)
&=&p^{\kappa+2\alpha}\sideset{}{^*}\sum_{c_2(\text{{\rm mod }} p^{\beta+\delta_2})}\;
    \mathop{\sideset{}{^*}\sum_{a (\text{{\rm mod }} p^{\alpha+\delta_1})}\;
\sideset{}{^*}\sum_{h (\text{{\rm mod }} p^{\alpha+\delta_1})
}}_{
\eta q \overline{n_1}a^2-p^{\kappa-\lambda}a+\overline{c_2}\equiv 0(\bmod p^{\alpha})
\atop
\eta q \overline{n_2}h^2-p^{\kappa-\lambda}h+\overline{c_2+mqd^{-1}}
\equiv 0(\bmod p^{\alpha})}
f(c_2,a,h,m,d)\\&&\times
\sum_{c_1(\text{{\rm mod }} p^{\beta})}\;
e\left(\frac{\overline{(\overline{c_2}-ap^{\kappa-\lambda})c_2^2}
-\overline{(\overline{c_2+mqd^{-1}}-hp^{\kappa-\lambda})
(c_2+mqd^{-1})^2}}{p^{\beta}}\eta'c_1\right),
\ena
where
\bna
f(c_2,a,h,m,d)=\overline{\chi}(\overline{c_2}-ap^{\kappa-\lambda})
\chi(\overline{c_2+mqd^{-1}}-hp^{\kappa-\lambda})
  e\left(\frac{-n_1\overline{qa}+n_2\overline{qh}}
  {p^{\lambda}}\right).
\ena
By the orthogonality of additive characters, the last sum over $c_1$ vanishes unless
\bna
mqd^{-1}+ap^{\kappa-\lambda}c_2^2
+hp^{\kappa-\lambda}(c_2+ mqd^{-1})^2\equiv 0 (\text{{\rm mod }} p^{\beta}),
\ena
which implies $p^{\kappa-\lambda}|m$ if we assume $\lambda\geq \kappa/2$.
Write $m=p^{\kappa-\lambda}m'$ and further assume that $\lambda\leq 3\kappa/4$.
Then the above congruence is reduced to
$m'qd^{-1}+ac_2^2+hc_2^2\equiv 0(\text{{\rm mod }} p^{\beta-\kappa+\lambda})$.
Hence
\bna
\mathcal{C}^*(n_1,n_2,m)
\ll p^{\kappa+\beta+2\alpha} \mathop{\sideset{}{^*}\sum_{c_2(\text{{\rm mod }} p^{\beta+\delta_2})}\;
   \sideset{}{^*}\sum_{a (\text{{\rm mod }} p^{\alpha+\delta_1})}\;
\sideset{}{^*}\sum_{h (\text{{\rm mod }} p^{\alpha+\delta_1})
}1.}_{\substack{
\eta q \overline{n_1}a^2-p^{\kappa-\lambda}a+\overline{c_2}\equiv 0(\bmod p^{\alpha})\\
\eta q \overline{n_2}h^2-p^{\kappa-\lambda}h+\overline{c_2+p^{\kappa-\lambda}m'qd^{-1}}
\equiv 0(\bmod p^{\alpha})\\
m'qd^{-1}+ac_2^2+hc_2^2\equiv 0 (\text{{\rm mod }} p^{\beta-\kappa+\lambda})
}}
\ena
Moreover, we write $c_2=c_2'+p^{\alpha}c_2''$ with
$c_2'(\bmod \,p^{\alpha})$ and $c_2''(\bmod \,p^{\beta+\delta_2-\alpha})$. Then
\bea\label{middle-expression}
\mathcal{C}^*(n_1,n_2,m)
\ll p^{2\kappa+\alpha}
\mathop{\sideset{}{^*}\sum_{c_2'(\text{{\rm mod }} p^{\alpha})}\;
   \sideset{}{^*}\sum_{a (\text{{\rm mod }} p^{\alpha+\delta_1})}\;
\sideset{}{^*}\sum_{h (\text{{\rm mod }} p^{\alpha+\delta_1})
}1.}_{\substack{
\eta q \overline{n_1}a^2-p^{\kappa-\lambda}a+\overline{c_2'}\equiv 0(\bmod p^{\alpha})\\
\eta q \overline{n_2}h^2-p^{\kappa-\lambda}h+\overline{c_2'+p^{\kappa-\lambda}m'qd^{-1}}
\equiv 0(\bmod p^{\alpha})\\
m'qd^{-1}+ac_2'^2+hc_2'^2\equiv 0 (\text{{\rm mod }} p^{\beta-\kappa+\lambda})
}}
\eea

We further assume $\lambda\geq (2\kappa+1)/3$. Then the first two congruences
in \eqref{middle-expression} involving
$n_2$ imply that $\overline{n_2}h^2\equiv \overline{n_1}a^2(\bmod \,p^{\kappa-\lambda})$.
So for $p^{\beta-\kappa+\lambda}|m'$, the last congruence
in \eqref{middle-expression} imply $n_1\equiv n_2 (\bmod \,p^{\beta-\kappa+\lambda})$.
Trivially, for fixed $n_1$, $n_2$, $m'$ and $c_2'$, there are at most two roots
for $a$ and $h$ with modulus $p^{\alpha}$, respectively. Thus
\bna
\mathcal{C}^*(n_1,n_2,m)\ll p^{2\kappa+\lambda+\delta_1}.
\ena
This proves the first statement of the lemma.

(2) In the following we assume $m'=p^{\ell}m^*$ with $(m^*,p)=1$ and $\ell<\beta-\kappa+\lambda$.
To count the numbers of $c_2',a$ and $h$, we solve the three congruence
equations in \eqref{middle-expression}. Write $a=a_3+p^{\beta-\kappa+\lambda}a_4$,
$c_2'=c_3+p^{\beta-\kappa+\lambda}c_4$ and $h=h_3+p^{\beta-\kappa+\lambda}h_4$,
with $a_3,c_3,h_3 (\bmod \, p^{\beta-\kappa+\lambda})$,
$c_4(\bmod \, p^{\alpha-(\beta-\kappa+\lambda)})$ and
$a_4,h_4 (\bmod \, p^{\alpha+\delta_1-(\beta-\kappa+\lambda)})$.
Then the last congruence implies
\bea\label{congruence 1}
a_3\equiv -h_3-m'qd^{-1}\overline{c_3^2}\;(\bmod \, p^{\beta-\kappa+\lambda} )
\eea
which plugged into the first congruence equation in \eqref{middle-expression} yields
\bea\label{congruence 2}
\big(h_3+m'qd^{-1}\overline{c_3^2}\big)^2+\overline{\eta q }n_1\overline{c_3}
\equiv 0\;(\bmod \, p^{\beta-\kappa+\lambda}).
\eea
On the other hand, the second congruence equation in \eqref{middle-expression} implies
\bea\label{congruence 3}
h_3^2+\overline{\eta q }n_2\overline{c_3}
\equiv 0\;(\bmod \, p^{\beta-\kappa+\lambda}).
\eea
By \eqref{congruence 2} and \eqref{congruence 3}, one has
\bna
2m'qd^{-1}\overline{c_3^2}h_3+(m'qd^{-1})^2\overline{c_3^4}
+\overline{\eta q}(n_1-n_2)\overline{c_3}\equiv 0 \;(\bmod \, p^{\beta-\kappa+\lambda}),
\ena
which implies $p^{\ell}\| n_1-n_2$ and
\bea\label{congruence 4}
h_3\equiv -\overline{2\eta m^*q^2d^{-1}}p^{-\ell}(n_1-n_2)c_3
-\overline{2}p^{\ell}m^*qd^{-1}\overline{c_3^2}(\bmod \, p^{\beta-\kappa+\lambda-\ell}).
\eea
Plugging \eqref{congruence 4} into \eqref{congruence 3},  we obtain
\bna
\left(\overline{2\eta m^*q^2d^{-1}}p^{-\ell}(n_1-n_2)c_3^2
+\overline{2}p^{\ell}m^*qd^{-1}\right)^2+\overline{\eta q }n_2c_3^3
\equiv 0 (\bmod \, p^{\beta-\kappa+\lambda-\ell}).
\ena
It follows that for fixed $n_1,n_2$ and $m^*$, there are at most six roots modulo
$p^{\beta-\kappa+\lambda-\ell}$ for $c_3$. Fixing $c_3$, the relation
\eqref{congruence 4} fixes $h_3$ uniquely modulo
$p^{\beta-\kappa+\lambda-\ell}$. For fixed $c_3$ and $h_3$,
$a_3$ is uniquely determined modulo $p^{\beta-\kappa+\lambda}$  by \eqref{congruence 1}.
Now we fix $a_4$, then $c_4,h_4 (\bmod \, p^{\alpha-(\beta-\kappa+\lambda)})$
(and so $c_2'$, $a$, $h (\bmod \, p^{\alpha})$) are determined up to a constant.
We conclude that
\bna
\mathcal{C}^*(n_1,n_2,m)\ll p^{2\kappa+\alpha+2\ell +2\delta_1+\alpha-(\beta-\kappa+\lambda)}
\ll p^{5\kappa/2+2\ell +\delta_1+\delta_2/2}.
\ena

(3) For $m=0$, we have
\bna
\mathcal{C}^*(n_1,n_2,0)
=p^{\kappa}  \sideset{}{^*}\sum_{c(\text{{\rm mod }} p^\kappa)}\;
  \sideset{}{^*}\sum_{b_1 (\text{{\rm mod }} p^{\lambda})}\;
    \sideset{}{^*}\sum_{b_2 (\text{{\rm mod }} p^{\lambda})}
\overline{\chi}(c-b_1p^{\kappa-\lambda})
\chi(c-b_2p^{\kappa-\lambda})
  e\left(\frac{-n_1\overline{q}\overline{b_1}}{p^\lambda}\right)
  e\left(\frac{n_2\overline{q}\overline{b_2}}{p^\lambda}\right).
\ena
By the Fourier expansion of $\chi$ in the terms of additive characters (see (3.12) in \cite{IK}),
we have
\bna
\mathcal{C}^*(n_1,n_2,0)
&=&\frac{p^{\kappa}}{\tau(\chi)\tau(\overline{\chi})}
\sum_{\ell_1(\bmod p^\kappa)}\chi(\ell_1)
\sum_{\ell_2(\bmod p^\kappa)}\overline{\chi}(\ell_2)
 \sideset{}{^*}\sum_{b_1 (\text{{\rm mod }} p^{\lambda})}
e\left(\frac{-b_1\ell_1-n_1\overline{q}\overline{b_1}}{p^{\lambda}}\right)
 \\&&\times  \sideset{}{^*}\sum_{b_2 (\text{{\rm mod }} p^{\lambda})}
 e\left(\frac{-b_2\ell_2+n_2\overline{q}\overline{b_2}}{p^{\lambda}}\right)
\sideset{}{^*}\sum_{c(\text{{\rm mod }} p^\kappa)}\;
e\left(\frac{c(\ell_1+\ell_2)}{p^\kappa}\right).
\ena
The sum over $c$ equals
\bna
S(0,\ell_1+\ell_2; p^\kappa)=
\sum_{d|(\ell_1+\ell_2,p^{\kappa})}d\mu\left(\frac{p^\kappa}{d}\right)
=p^{\kappa}\mathbf{1}_{\ell_2\equiv -\ell_1(\text{{\rm mod }} p^{\kappa})}
-p^{\kappa-1}\mathbf{1}_{\ell_2\equiv -\ell_1(\text{{\rm mod }} p^{\kappa-1})}.
\ena
Note that
\bea\label{vanish1}
\sum_{\ell_2(\bmod p^\kappa)\atop \ell_2\equiv -\ell_1(\text{{\rm mod }} p^{\kappa-1})}
\overline{\chi}(\ell_2)e\left(\frac{-b_2\ell_2}{p^{\lambda}}\right)
=e\left(\frac{-b_2\ell_1}{p^{\lambda}}\right)\overline{\chi}(-\ell_1)
\sum_{a(\bmod p)}\chi(1+\overline{\ell_1}p^{\kappa-1}a).
\eea
Recall $\chi$ is a primitive character of modulus $p^{\kappa}$.
Thus $\chi(1+zp^{\kappa-1})$ is an additive character to the modulus $p$,
so there exists an integer
$\eta''$ (uniquely determined modulo $p$), $(\eta'',p)=1$, such that
$\chi(1+zp^{\kappa-1})=\exp(2\pi i \eta'' z/p)$.
Therefore the $a$-sum in \eqref{vanish1} vanishes and
\bna
\mathcal{C}^*(n_1,n_2,0)
&=&\frac{p^{2\kappa}\overline{\chi}(-1)}{\tau(\chi)\tau(\overline{\chi})}\;
 \sideset{}{^*}\sum_{b_1 (\text{{\rm mod }} p^{\lambda})}
e\left(\frac{-n_1\overline{q}\overline{b_1}}{p^{\lambda}}\right)
\sideset{}{^*}\sum_{b_2 (\text{{\rm mod }} p^{\lambda})}
 e\left(\frac{n_2\overline{q}\overline{b_2}}{p^{\lambda}}\right)
 \sum_{\ell_1(\bmod p^\kappa)}
 e\left(\frac{(b_2-b_1)\ell_1}{p^{\lambda}}\right)\\
&=&p^{2\kappa}\;
 \sideset{}{^*}\sum_{b_1 (\text{{\rm mod }} p^{\lambda})}
e\left(\frac{-n_1+ n_2}{p^{\lambda}}b_1\right)\\
&=&p^{2\kappa}\sum_{d|(n_1-n_2,p^{\lambda})}d\mu\left(p^{\lambda}/d\right).
\ena
Thus the last statement of the lemma follows.

\bigskip

\noindent
{\sc Acknowledgements.}
This work is partly supported by NSFC (Nos. 11871306, 12031008).

\bigskip

{\sc \small School of Mathematics and Statistics, Shandong University, Weihai,
Weihai, Shandong 264209, China}

{\footnotesize {\it E-mail address:} qfsun@sdu.edu.cn}

\end{document}